\newcommand{\Mypm}{\mathbin{\tikz [x=1.4ex,y=1.4ex,line width=.1ex] \draw (0.0,0) -- (1.0,0) (0.5,0.08) -- (0.5,0.92) (0.0,0.5) -- (1.0,0.5);}}%
\newcommand{\distas}[1]{\mathbin{\overset{#1}{\kern\z@\sim}}}%
\newtheorem{theorem}{Theorem}
\title{Energy distance and kernel mean embedding for two sample survival test}
\author{Marcos Matabuena\\ 
	marcos.matabuena@usc.es \\
	Centro de Investigaci\'{o}n en Tecnolox\'{i}as da Informaci\'{o}n (CiTIUS),\\ Universidade de Santiago de Compostela, Santiago de Compostela. Spain }
\begin{document}
	\maketitle
	\section{Abstract}
	In this article a new family of tests is proposed for the comparison problem of the equality of distribution of two-sample under right censoring scheme. The %new
	tests are based on energy distance and kernels mean embedding, are calibrated by permutations and are consistent against all alternatives. The good performance of the new tests  in real situations with finite samples is established with a simulation study.
	
	{\it:} 
	\vfill

	\section{Introduction}
	%	El análisis  de supervivencia es una rama de la estadística que tiene como objetivo analizar los tiempos de vida de uno o más eventos de interes. Algunos ejemplos pueden ser el tiempo de vida hasta la muerte de organismos biológicos o el tiempo hasta el próximo fallo de una maquina. 	Probablemente sus inicios se remontan a varios siglos atrás con las famosas tablas de vida en los censos, no obstante el verdadero surgimiento de la disciplina no fue hasta la segunda guerra mundial en los problemas de fiabilidad  como es el caso del equipamiento militar. Al final de la guerra los nuevos métodos desarrollados  fueron aplicados en el ámbito médico con datos de mortatilidad y a multitud de problemas industriales \cite{biblsingh2011survival}. Desde entonces, una  nueva oleada de nueva  metodologia estadística ha sido desarrollado, e incluso para contextos de alta dimensión. Para más información consultar las siguientes referencias básicas.   

	One of the main objectives of the survival analysis is to compare the distribution of  the lifetime of two-sample coming of two different groups. The most popular example of this situation is the case of clinical trials when evaluating the efficacy of two treatments \cite{biblsingh2011survival}. Under a context of right censored data, the test most used within the scientific community to contrast the equality between two distribution curves is the logrank-test \cite{schoenfeld1981asymptotic,yang2010improved,su2018time} proposed at $1959$ by Mantel and Haenzel \cite{mantel1959statistical}. This test is known to be the most powerful test when the hazard functions are proportional to each other \cite{schoenfeld1981asymptotic,su2018time,xu2017designing}. However, when this hypothesis is violated the test has a significant loss of power \cite{fleming1980modified,lachin1986evaluation,lakatos1988sample,schoenfeld1981asymptotic}.

	%	No obstante, este análisis tambien es frecuente en otros campos como el de la fiabilidad  con los tiempos hasta el fallo de las maquinas \cite{gnedenko2014mathematical}. 

	Currently a hot topic in the medical field is for the hypothesis test to use \cite{su2018time}, due to the lack of statistical power of the log-rank test found in many real case studies \cite {su2018time}. This is the case of the new oncological treatments where, for example, with new immunotherapy therapies, they  have a delayed effect  \cite{melero2014therapeutic,xu2017designing,xu2018designing,su2018time}. Also in the multimodal treatments \cite{moehler2007multimodal} where it is expected that the density function in many occasions present several mode, or in cases where healing occurs \cite{lopez2017nonparametric}. In any of these situations, the hypothesis of proportional risks is strongly unfulfilled.

	From a mathematical statistics point of view it is well-known that any test with finite samples has a poor behavior except in a finite number of directions. This means that in real scenarios we have no guarantee that one test will always be better than another. Precisely Janssen \cite{janssen2000global} proved that you can not expect to build a test with a high power, except in a space of finite dimension. However, this does not mean that you can not build tests with an acceptable power for a large number of alternatives and in situations of interest, the objective sought by the statistical community in recent decades.

	In the literature there are two types of different  tests: the directionals and the omnibus. The former seek maximum power in specific directions, while the latter are consistent against all alternatives. The most popular family of directional tests with  right censoring is that of the logrank-test \cite{fleming1987supremum}, to which the statistic of the logrank test is assigned a weight function that determines the optimality in certain directions \cite{gehan1965generalized,tarone1977distribution,peto1972asymptotically,fleming1981class}. On other occasions, within these tests the results of the individual tests are even combined to construct a global test \cite{bathke2009combined}, or the function of weights \cite{yang2010improved} is estimated, but this needs a significant amount of data. The Kolmogorov-Smirnov \cite{fleming1980modified} test and the Cramer-von Mises with censorship on the right \cite{schumacher1984two} are two examples of omnibus tests.

	The energy distance \cite{szekely2003statistics,szekely2013energy} is a statistical distance that measures how many different two probability distributions are. It is based on the calculation of Euclidean distances between pairs of variables and the notion of potential energy, and it has been used among other problems to compare the equal distribution in problems with several samples \cite{szekely2004testing}, goodness of fit \cite{szekely2005new}, and cluster analysis \cite{szekely2005hierarchical}. The main characteristic of this statistic is that it requires minimum hypotheses for its use, only conditions on the moments of the random variables involved. Its multivariate extension is immediate, and the test for the comparison of equality in distribution in problems with several samples presents a high statistical power with known distributions, even in high-dimensional contexts  \cite{szekely2004testing}, being consistent for all alternatives. The generalization of the test with other types of metrics than the Euclidean ones like the negative type \cite{lyons2013distance,rachev2013methods} is equivalent to the methods kernel \cite{sejdinovic2013equivalence,shen2018exact} proposed in \cite{gretton2012kernel} and based on the kernel mean embedding \cite{muandet2017kernel}.

	The main objective of this paper is to extend these tests to a context of right censored data in the univariate case. The structure of the paper is as follows: first we review the main literature of the methods of comparison of equality in the distribution  of two-sample with right censoring, then we explain the relationship between energy distance and kernels mean embedding. The statistics are then derived and their theoretical properties of the test are established as the consistency against all alternatives. Finally a simulation study is carried out to compare the behavior of the proposed new methods against the classical tests of the literature. To do this, we will compare the power and error type I using known distributions, in addition to the cases discussed above, with delay, recovery or multimodality, where the log-rank test have less than ideal performance.

	\section{Previous research}
	
	Henceforth, let us consider the traditional framework in the problems of two-sample survival comparison  given by the lifetimes $T_{ji} \distas{} P_j$ $(j=0,1; i=1,\dots,n_j)$ and censoring times $C_{ji} \distas{}$ $Q_j$ $(j=0,1; i=1,\dots,n_j)$ with distributions   $P_j$ y $Q_j$  $(j=0,1)$ defined in an subset of $\mathbb{R^{+}}$. As usual, the random variables  $T_{01},\dots,T_{0n_0},\dots,T_{11},\dots,T_{1n_1},$ $ C_{01},\dots,C_{0n_0},\dots,C_{11},\dots,C_{1n_1}$ are assumed to be independent of each other. In practice only the random variables are observed  $X_{ji}= min(T_{ji},C_{j,i})$  and $\delta_{ji}= 1\{X_{ji} = T_{ji}\}$	$(j=0,1; i=1,\dots,n_j)$. We will always assume $ E(T_{ji}^{2})<\infty $ and $ E(C_{ji}^ {2}) <\infty$, and that the variables $X_{ji}, T_{ji},C_{j,i}$ $(j=0,1; i=1,\dots,n_j)$ are continuous   for simplification.

	The problem of two-sample that we will study is the following:
	
	\begin{equation}
	H_0: P_0(t)= P_1(t) 
	\hspace{0.2cm} \forall  t> 0 \text{ versus }    H_a: \exists t>0 \text{ such that }  P_0(t)\neq  P_1(t).
	\end{equation}
	
	At the maximum times observed for each group we will call them $\tau_0$ and $\tau_1$ respectively, and at the minimum of both, $\tau=\min(\tau_0,\tau_1)$.

	Next, we will describe the previous main literature on directional and omnibus tests.

	\subsection{Directional tests: The log-rank test family}

	In this subsection we will describe the logrank test and its different variants.

	The times of failure will be denoted as $\tau_1<\tau_2\cdots<\tau_k$. We define:  
	
	$Y_i(\tau_j)$=  $\#$ people in the group  $i$ who are at risk in  $\tau_j$ $(i=0,1;j=1,2\dots,k)$.
	
	$Y(\tau_j)= Y_0(\tau_j)+ Y_1(\tau_j)$=  $\#$ people at risk in  $\tau_j$ (in both groups).
	
	$d_{ij}=$ $\#$ people who fail in the group $i$ in $\tau_j$ $(i=0,1;j=1,2\dots,k)$.
	
	$d_j= d_{0j}+d_{1j}=$  $\#$ people who fail in $\tau_j$.
	
	The statistic has the following structure:

	\begin{equation}
	\hat{Z}^{2}= \frac{[\sum_{j=1}^{k}\omega_{j}(d_{1j}-E(d_{1j}))]^2}{\sum_{j=1}^{k} Var(\omega_{j}(d_{1j}-E(d_{1j})))}
	\end{equation}

	where $\omega_j$ $(j=1,2,\dots,k)$ is a weighting function that determine the properties of the test, and that depends on the number of people at risk in time $j$, $Y(\tau_j)$, of the survival function estimated in time  $j$ $\hat{S}(\tau_j)$,  or in the last instant $\hat{S}(\tau_{j-1})$.

	Under the null hypothesis $H_0: P_0(\cdot)= P_1(\cdot)$, $d_{1j}\approx H(Y(\tau_j),Y_1(\tau_j),d_j)$, where $H$ denotes the hypergeometric distribution and therefore, it is fulfilled, $E(d_{1j})= \frac{d_{1j}}{Y_j}Y_{1j}$ y $Var(d_{1j}-E(d_{1j}))= \frac{d_{j}(Y_{1j}/Y_j)(1-Y_{1j}/Y_j)(Y_{j}-d_j)}{Y_{j}-1}$·

	%La información a tiempo $\tau_k$ suele resumirse en la siguiente tabla $2\times2$.

	\begin{table}[h!]
		
		\begin{center}

			\caption{Possible events on time $\tau_k$}

			\begin{tabular}{||c c c c||} 
				\hline
				& Group $0$ &  Group $1$ & Total \\ [0.5ex] 
				\hline\hline
				Number of live subjects  & $Y_0(\tau_j)$ & $Y_1(\tau_j)$ & $Y(\tau_j)$\\ 
				\hline
				Number of subjects that die  & $d_{0j}$ & $d_{1j}$ & $d_j$ \\
				\hline
			\end{tabular}
		\end{center}
	\end{table}

	%El estadístico de contraste es el siguiente:

	The main characteristics of the log-rank test and its variants will be described below:

	%$que resultan de modificar la función $w_j$ $(j=1,2,\dots,k)$.

	\begin{itemize}
		\item \textbf{Log-rank} \cite{mantel1959statistical}.
		
		The logrank test is optimal when the hazard function of the two groups are proportional. It results from taking $\omega_j=1$ $(j=1,2,\dots,k)$. Under the null hypothesis is fulfilled:

		\begin{equation*}
		\hat{Z}^{2}= \frac{[\sum_{j=1}^{k}(d_{1j}-E(d_{1j}))]^2}{\sum_{j=1}^{k} Var(d_{1j}-E(d_{1j}))}\overset{d}{\to} \chi^{2}_{1}.
		\end{equation*}
		
		%This test is known on several occasions as the extension of chi-square test under censorship \cite{nikulin2011non}.
		
		\item \textbf{Gehan Generalized Wilcoxon Test} \cite{gehan1965generalized}
		
		It is a test of free distribution that is an extension of the Wilconxon test in a context of right-censored. It provides much more weight to the early survival times. For this, it is taken as a function of weights $\omega_j= Y(\tau_j)$ $(j=1,2,\dots,k)$.

		%	El estadístico resultante es el siguiente:
		%	
		%	 \begin{equation}
		%	 Z^{2}= \frac{[\sum_{j=1}^{k}Y(\tau_j)(d_{1j}-E(d_{1j}))]^2}{\sum_{j=1}^{k} Var(Y(\tau_j)(d_{1j}-E(d_{1j})))}.
		%	 \end{equation}

		\item \textbf{Tarone-Ware} \cite{tarone1977distribution}
		
		It is a modification of the Gehan test, whose weight function is $\omega_j= \sqrt{(Y(\tau_j)}$ $(j=1,2,\dots,k)$, which assigns lower weights than in the Gehan test.

		%El estadístico es el siguiente:
		%
		%\begin{equation}
		%Z^{2}= \frac{[\sum_{j=1}^{k} \sqrt{Y(\tau_j)}(d_{1j}-E(d_{1j}))]^2}{\sum_{j=1}^{k} Var(\sqrt{Y(\tau_j)}(d_{1j}-E(d_{1j})))}.
		%\end{equation}
		
		\item \textbf{Peto-Peto} \cite{peto1972asymptotically}
		
		The Peto test is used when the hazard function is not proportional, and the Kaplan-Meier estimator is used in the weight function $\omega_j= \hat{S}(t_j)$ $(j=1,2,\dots,k)$. The initial times receive more weighting than the more distant observations.   
		
		\item \textbf{Fleming $\&$ Harrington family $G^{\rho,\gamma}$} \cite{fleming1981class}
		
		In the test family Fleming $\&$ Harrington $G^{\rho,\gamma}$   the function of weights $\omega_j= \hat{S}(t_{j-1})^{\rho}(1-\hat{S}(t_{k-1}))^{\gamma}$ $(j=1,2,\dots,k)$ depends on two parameters $\rho \geq 0$ y $\gamma \geq 0$ that give the test much flexibility. The choice as plug-in of the Kaplan-Meier estimator increases the power of the test \cite{buyske2000class}.

	\end{itemize}

	%		

	%		\begin{equation*}
	%		
	%		\end{equation*}
	%		
	%		
	%		\begin{equation*}
	%		
	%		\end{equation*}
	%		
	%		

	\subsection{The omnibus tests}

	The  Kolmogorov Smirnov \cite{fleming1980modified,schumacher1984two} and Cramér-von Mises tests \cite{schumacher1984two}  under right-censored data are the
	most popular omnibus test.  There are several versions of these two tests but some have certain limitations. For example, the direct extension of the   Cramér-von Mises test to the censored case, the limit distribution \cite{koziol1978two} of the Cramér-von Mises in general can not be calculated. In this subsection we will explain two versions of both tests proposed in \cite{schumacher1984two} and based on the comparison of cumulative empirical hazard function.

	Suppose   $X_{ji}= min(T_{ji},C_{j,i})$  and $\delta_{ji}= 1\{X_{ji}= T_{ji}\}$	$(j=0,1; i=1,\dots,n_j)$ under the conditions of independence assumed in the section $2$ on the variables  $T_{ji},C_{j,i}$ $(j=0,1; i=1,\dots,n_j)$.

	To the ordered sample we will call them $X_{0(1:n_0)}$, $X_{0(2:n_0)},\dots, X_{0(n_0:n_0)}$,
	$X_{1(1:n_1)}$,$X_{1(2:n_1)},\dots X_{1(n_1:n_1)}$ and we will also refer to the corresponding censorship with respect to induced ordering for observed times $\delta_{0(1:n_0)}$, $\delta_{0(2:n_0)},\dots \delta_{0(n_0:n_0)}$,
	$\delta_{1(1:n_1)}$, $\delta_{1(2:n_1)},\dots \delta_{1(n_1:n_1)}$.

	Denoting by $S_0(t)$, $S_1(t)$  to the survival functions of the groups $0$ and $1$ respectively at the time instant $t$  and   $\Lambda_0(t)$, $\Lambda_1(t)$ to its cumulative hazard function, and considering the function

	%	\begin{equation*}
	%    \beta(t)= \log(\Lambda_1(t))-\log(\Lambda_0(t))= \log(-\log(S_1(t)))-\log(-\log(S_0(t))),
	%	\end{equation*}  
	%	

	\begin{equation*}
	\epsilon(t)= \Lambda_1(t)-\Lambda_0(t)= -\log(S_1(t))+\log(S_0(t))).
	\end{equation*}

	The comparison problem $(1)$ can be expressed as:

	\begin{equation}
	H_0: \epsilon(t)= 0 
	\hspace{0.2cm} \forall  t> 0 \text{ versus }    H_a: \exists t>0 \text{ such that }  \epsilon(t)\neq 0.
	\end{equation}

	The function $\epsilon(t)$ can be estimated by:

	%
	%	\begin{equation}
	%	\hat{\epsilon}(t)= \log \hat{\Lambda}_1(t)-\log\hat{\Lambda}_0(t)
	%	\end{equation}

	\begin{equation*}
	\hat{\epsilon}(t)=  \hat{\Lambda}_1(t)-\hat{\Lambda}_0(t)
	\end{equation*}

	%	para un $t$ mayor que el mínimo de $\{t>0: \hat{\Lambda}_0(t)>0 \text{ y }  \hat{\Lambda}_1(t)>0 \}$,  

	where, $\hat{\Lambda}_j(t)= \sum_{i:\hspace{0.1cm} \tau_i\leq t}\frac{d_{ji}}{Y_j(\tau_i)}$ $(j=0,1)$,  
	denotes the estimator of Nelson-Aalen
	
	\cite{nelson1972theory} of each group.      
	
	We define:

	$Y_j(t)= \sum_{i=1}^{n_j} 1\{X_{j(i:n_j)}\geq t\} \hspace{0.2cm} (j=1,2), $
	
	$\hat{A_j}(t)= n_j\sum_{i: \hspace{0.1cm} X_{j(i:n_j)\leq t}}\frac{\delta_{j:(i:n_j)}}{Y_j(X_{j(i:n_j)})[Y_{j(i:n_j)}+1]} \hspace{0.2cm} (j=1,2),$
	
	$\hat{A}(t)= \frac{n_0+n_1}{n_0}\hat{A}_0(t)+\frac{n_0+n_1}{n_1}\hat{A}_1(t),$

	$\hat{H}(t)=\hat{A}(t)/(1+\hat{A}(t)),$

	$\hat{\psi}_{\epsilon}(t)= 1
	/(\hat{A}(\tau))^{\frac{1}{2}},$

	$\hat{\psi}^{0}_{\epsilon}(t)= 1
	/(1+\hat{A}(t))$.
	
	%donde	$\hat{\Lambda}(t)$ es el estimador de Nelson-Aalen de la muestra combinada.  

	From the previous expressions we can write the following two statistics of the Kolmogorov test

	\begin{equation}
	Q_{KS\epsilon}= (n_0+n_1)\sup_{0\leq t\leq\tau} |\hat{\epsilon}(t)\hat{\psi_\epsilon}(t)| \hspace{0.2cm} 	Q^{0}_{KS\epsilon}= (n_0+n_1)\sup_{0\leq t\leq\tau} |\hat{\epsilon}(t)\hat{\psi}^{0}_\epsilon(t)|,
	\end{equation}

	and also for the Cramér?von Mises test:

	\begin{equation}
	Q_{CM\epsilon}= ((n_0+n_1)/\hat{A}(\tau))\int_{0}^{\tau} (\hat{\epsilon}(t)\hat{\psi_\epsilon}(t))^{2}d\hat{A}(t) \hspace{0.2cm} 	Q^{0}_{CM\epsilon}= (n_0+n_1)\int_{0}^{\tau} (\hat{\epsilon}(t)\hat{\psi^{0}_\epsilon}(t))^{2}d\hat{H}(t).
	\end{equation}

	All statistics are consistent against all alternatives, and convergence almost surely  to their analogous populations. The limit distribution of Kolmogorov Smirnov tests    is in $Q_{KS\epsilon}$  the next 	Gaussian process $W(A(t)/A(\tau))$ where  $W(x)$  denotes a Brownian standard movement and $Q^{0}_{KS\epsilon}$ converge to $W^{0}(H(t))$ where $W^{0}(x)$ is a Brownian bridge. While 
	$Q_{CM\epsilon}$ converge to $A(\tau)^{2}\int_{0}^{\tau}[W(A(t))^{2}]dA(t))$ and $Q^{0}_{CM\epsilon}$ to $A(\tau)^{2}\int_{0}^{\tau}[W^{0}(H(t))^{2}]dH(t))$.
	
	For more details consult the following reference \cite{schumacher1984two}.

	%	Antes de comenzar vamos	a introducir algo de notación previa. Definimos 
	
	%	Definimos. 
	%	
	%	$\beta(t)= \log(h_1(t))-\log(h_0(t))= log(-log(S_1(t)))-log(-log(S_0(t)))$
	%	,
	%	$\hat{\beta}(t)= \log(\hat{h}_1(t))-\log(\hat{h}_0(t))= log(-log(\hat{S}_1(t)))-log(-log(\hat{S}_0(t)))$,
	%	
	%	
	%	
	%	
	%	
	%	donde $h_j(t)$, $\hat{h}_j(t)$, $S_j(t)$, $\hat{S_j}(t)$ $(j=0,1)$ denotan respectivamente en el tiempo $t$ y en el grupo $j$, a la función de riesgo, a su estimación con el método de  Nelson-Aalen  \cite{nelson1972theory}, la función de supervivencia, y su estimación con el estadístico Kaplan-Meier \cite{kaplan1958nonparametric}.  
	%	
	%	
	%	
	%	
	%	El estadístico de contraste del test Kolmogorov Smirnov es el siguiente:
	%	
	%	
	%	\begin{equation}
	%	\hat{D}_n= \sup_{t\in \mathbb{R^{+}}} |S_{n}(t)-S(t)|
	%	\end{equation}
	%	
	%	
	%	donde $S_{n}(t)$  es el valor del estimador Kaplan Meier en $t$ de la función de supervivencia real $S(t)$.
	%	
	%	$D_n= \max(D_n^{+},D_{n}^{-})$, con $D_n^{+}) = \max_{i: \delta_i=1}\{\hat{S_n}\} $

	\section{The energy distance and the kernels mean embedding}

	In this section we will introduce the energy distance, the RKHS (reproducing kernel Hilbert space) and its relation with the kernels mean embeddings. The explanation will be first at the population level and then at the sample level.

	Given the random variables in $\mathbb{R}^{d}$ $X$,$X^{\prime}\distas{iid}P$ and $Y$,$Y^{\prime}\distas{iid}Q$, with finite moments of order one $E(||X||_d)\leq \infty$, $E(||X^\prime||_d)\leq \infty$, $E(||Y||_d)\leq \infty$,$E(||Y^\prime||_d)\leq \infty$, and where,  $P$ y $Q$ denotes its distribution functions. The energy distance \cite{szekely2003statistics,szekely2013energy} between the distributions $P$ and $Q$ is defined by:

	\begin{equation}
	\epsilon(P,Q)= 2E||X-Y||- E||X-X^{'}||-E||Y-Y^{'}||,
	\end{equation}
	
	where $||\cdot||$ denotes the Euclidean norm.

	It can be proved that $\epsilon(P,Q)$ it is invariant before rotations, in addition, it is non-negative $\epsilon(P,Q)\geq 0$, giving equality to zero, if and only, $P=Q$.	
	
	The previous definition of energy distance can be extended for a family of indices $\alpha\in(0,2]$ \cite{szekely2013energy} (assuming in each case the existence of the moment of order $\alpha$). In this case, the $\alpha$ energy distance is:

	\begin{equation}
	\epsilon_\alpha(P,Q)= 2E||X-Y||^{\alpha}- E||X-X^{'}||^{\alpha}-E||Y-Y^{'}||^{\alpha}.
	\end{equation}

	verifying, for all $\alpha\in(0,2)$ $\epsilon_\alpha(P,Q)\geq 0$, and giving equality to zero, if and only, $P=Q$. In the particular case with $\alpha=2$, $\epsilon_2(P,Q)= 2||E(X)-E(Y)||^{2}$, 	and therefore, non-negativity is verified trivially, although in this situation,	 	$\epsilon_2(P,Q)=0$, implies equality in means and not in distribution between $P$ and $Q$.
	
	The notion of energy distance can be generalized to even more general spaces. Let $X,Y\in V$ where $V$ is an arbitrary space with a scalar product induced by a semi-metric of negative type \cite{rachev2013methods,lyons2013distance} $\rho: V\times V\to \mathbb{R}$, what is required to satisfy:

	\begin{equation}
	\sum_{i,j=1}^{n} c_i c_j \rho(X_i,X_j)\leq 0
	\end{equation}   	
	
	where $\forall X_i,X_j\in V$, and each $c_i\in \mathbb{R}$ such that $\sum_{i=1}^{n} c_i=0$. In this case, the pair $(V,\rho)$ it is said to be a negative type space \cite{lyons2013distance,rachev2013methods}. Replacing $\mathbb{R}^{d}$ by $V$ and $||X-Y||$ by $\rho(X,Y)$, in expression $(6)$, we obtain the generalized energy distance for the negative type space $(V,\rho)$:
	
	\begin{equation}
	\epsilon_\alpha(P,Q)= 2E\rho(X,Y)- E\rho(X,X^{\prime})-E\rho(Y,Y^{\prime}).
	\end{equation}
	
	In any negative type space $(V,\rho)$ there is a hilbert space $H$ and an application $\phi: V\to H$ such that $\rho(X,Y)=||\phi(X)-\phi(Y)||_{H}^{2}$\cite{rachev2013methods,sejdinovic2013equivalence}. The previous relationship allows calculating the amounts of the distributions on $V$ in the associated Hilbert space $H$. In the case $\rho$ does  does not satistate the triangular inequality, the function $\rho^{1/2}$ the function verifies the distance axioms.

	There is an equivalence \cite{szekely2013energy,shen2018exact} between energy distance, commonly used in statistics \cite{szekely2013energy}, and the distance defined in the kernels mean embeddings \cite{gretton2012kernel}, the approach used mostly in the field of machine learning \cite{gretton2012kernel}. Before explaining, we are going to introduce some basic concepts of the RKHS. For more information about the RKHS consult the following basic reference \cite{manton2015primer}.

	Let $H$ be the Hilbert space that contains the real variable functions defined above $V$. A function $K:V\times V\to \mathbb {R}$ is a reproducing kernel in $H$ if it satisfies the following two properties:

	\begin{enumerate}
		\item $K(\cdot,x)\in H$
		\item  $<K(\cdot,x),f>= f(x)$ $\forall x\in V$ and $f\in H$. 
	\end{enumerate}

	The two properties above imply that $K$ is a positive definite and symmetric function. The theorem of Moore-Aronszajn \cite{aronszajn1950theory,manton2015primer} establishes the converse equivalence, if $K: V\times V\to \mathbb {R}$ is a symmetric function and positive definite, there is a single  reproducing kernel Hilbert space $H_K $, which has as its reproducing kernel $K$. The application $\phi: x\to K(\cdot,x) \in H_K$ is the so-called canonical feature application. Given a $K$ kernel, this theorem provides a method of how to define an embedding of a probability measure $P$ in an RKHS space. To do this, just consider the application $P\to h_P \in H_K$ such that $\int f(x)dP(x) = <f,h_P> $ $\forall f \in H_K$, or equivalently, define $h_P = \int_{}^{}K(\cdot, x)dP(x) $.

	The notion of distance between two probabilities can be introduced using the inner product of $H_K$, which, is called measure of maximum discrepancy (MMD) \cite{gretton2012kernel} and is given by:

	\begin{equation}
	\gamma_K(P.Q)= ||h_P-h_Q||_{H_K}.
	\end{equation}

	The above expression \cite {gretton2012kernel} can also be written as :

	\begin{equation}
	\gamma_K^{2}(P,Q)= E(K(X,X^{'}))+E(K(Y,Y^{'}))-2E(K(X,Y))
	\end{equation}

	where $X$,$X^{\prime}\distas{iid}P$ and $Y$,$Y^{\prime}\distas{iid}Q$.

	The next important result  shows that negative-type semimetrics and positive defined kernels are strongly connected \cite{van2012harmonic}. Let $\rho:V\times V\to \mathbb{R}$ and $x_0\in V$ an arbitrarily fixed point. If it is defined:
	
	\begin{equation*}
	K(x,y)= \frac{1}{2}[\rho(x,x_0)+\rho(y,x_0)-\rho(x,y)].
	\end{equation*}

	Then, it can be shown that $K$ is a positive defined kernel if and only $\rho$ is a semimetric of negative type. In this way, we have a family of kernels, one for each election of $K(\cdot,x_0)$. Conversely, if $\rho$ is semimetric of negative type and $K$ is a kernel in this family, then it is verified:

	\begin{equation*}
	\rho(x,y)= K(x,x)+K(y,y)-2K(x,y)=||h_x-h_y||^{2}_{H_k}
	\end{equation*}

	Finally using the above equality, along with $(10)$ and $(11)$ can be established
	the relation between the distance in the kernels mean embedding and the distance of energy in a space of negative type $(V,p)$ \cite{sejdinovic2013equivalence}:
	
	\begin{equation}
	\epsilon(P,Q)=2[E(K(X,X')+EK(Y,Y')-2EK(X,Y)]=2 \gamma_K^{2}(P,Q).
	\end{equation}

	In a sample context, two samples are available $\{X_{0i}\}_{i = 1}^{n_0}\distas{iid} P_0 $, $\{X_{1i}\}_{i = 1}^{n_1}\distas{iid}P_1$ and the unknown quantities $\epsilon (P_0,P_1) $ and $\gamma_K^{2}(P_0,P_1)$ must be estimated. To do this, the empirical distribution is used as a plug-in and the statistical $U$ and $V$ is used as estimator. That is:

	\begin{equation}
	\hat{\epsilon}_{\alpha}(P_0,P_1)= \frac{2}{n_0n_1}\sum_{i=1}^{n_0}  \sum_{j=1}^{n_1} ||X_{0i}-X_{1j}|| - \frac{1}{n_0^2}\sum_{i=1}^{n_0}  \sum_{j=1}^{n_0} ||X_{0i}-X_{0j}|| - \frac{1}{n_1^2}\sum_{i=1}^{n_1}  \sum_{j=1}^{n_1} ||X_{1i}-X_{1j}||  \hspace{0.2cm} 
	\end{equation}
	
	\textbf{($V$ statistic $\alpha$ energy distance)},

	\begin{equation}
	\hat{\gamma}_K^{2}(P_0,P_1)=  \frac{2}{n_0n_1}\sum_{i=1}^{n_0}  \sum_{j=1}^{n_1} K(X_{0i},X_{1j})- \frac{1}{n_0^2}\sum_{i=1}^{n_0}  \sum_{j=1}^{n_0} K(X_{0i},X_{0j}) + \frac{1}{n_1^2}\sum_{i=1}^{n_1}  \sum_{j=1}^{n_1} K(X_{1i},X_{1j}).
	\end{equation}	
	
	\textbf{($V$ statistic kernel method)},

	\begin{equation}
	\hat{\epsilon}_{\alpha}(P_0,P_1)=  \frac{1}{n_0(n_0-1)}\sum_{i=1}^{n_0}  \sum_{i\neq j}^{n_0} ||X_{0i}-X_{0j}|| - \frac{1}{n_1(n_1-1)}\sum_{i=1}^{n_1}  \sum_{j\neq i}^{n_1} ||X_{1i}-X_{1j}||-\frac{2}{n_0n_1}\sum_{i=1}^{n_0}  \sum_{j=1}^{n_1} ||X_{0i}-X_{1j}|| 
	\end{equation}
	\textbf{($U$ statistic $\alpha$ energy distance)},
	\begin{equation}
	\hat{\gamma}_K^{2}(P_0,P_1)=  \frac{1}{n_0(n_{0i}-1)}\sum_{i=1}^{n_0}  \sum_{i\neq j}^{n_0} K(X_{0i},X_{0j}) + \frac{1}{n_1(n_1-1)}\sum_{i=1}^{n_1}  \sum_{j\neq i}^{n_1} K(X_{1i},X_{1j})-\frac{2}{n_0n_1}\sum_{i=1}^{n_0}  \sum_{j=1}^{n_1} K(X_{0i},X_{1j}).
	\end{equation}
	\textbf{($U$ statistic kernel method)}.
	
	where the kernel  $K:\mathbb{R}^{d}\times \mathbb{R}^{d}\to \mathbb{R}$ it has to be characteristic \cite{sriperumbudur2011universality,gretton2012kernel,muandet2017kernel}.

	In the  table $2$ we can see the most known kernels with the property of being characteristic.

	\begin{table}[h!]
		
		\begin{center}
			\caption{\textbf{Characteristics kernels}.
				$\Gamma(\cdot)$ denote Gamma function and $K_v$ is the modified Bessel
				function of the second kind of order
				$v$.}
			\label{tab:table1}
			\begin{tabular}{c|c} % <-- Changed to S 
				\textbf{Kernel Function} & \textbf{$k(x,y)$} \\ 
				\hline
				Gaussian & $\exp(-\sigma ||x-y||^{2}), \sigma>0$ \\ 
				Laplacian &  $\exp(-\sigma |x-y|),   \sigma>0$ \\
				Rational quadratic & $(||x-y||+c)^{-\beta}$, $\beta,\alpha>0$ \\
				Mattern   & $\frac{2^{1-v}}{\Gamma(v)}(\frac{\sqrt{2v}||x-y||}{\sigma})K_v(\frac{\sqrt{2v}||x-y||}{\sigma})$ \\
				
			\end{tabular}
		\end{center}
	\end{table}

	In the statistical community we usually use the energy of data with a $V$ statistic, which is a biased estimator \cite{kowalski2008modern}, but which is always greater than or equal to zero \cite{szekely2013energy}. While in the community of machine learning it is obtained by the kernel method with $U$ statistics, unbiased estimator \cite{kowalski2008modern}, with a lower computational cost, but which can take negative values \cite{gretton2012kernel}.

	Assuming moments of at least $2$ order in the random variables $\{X_{0i}\}_{i=1}^{n_0} \distas{iid}P_0$, $\{X_{1i}\}_{i=1}^{n_1}\distas{iid}P_1$, the sample statistic converges almost surely to the population version:

	\begin{equation}
	\hat{\epsilon}_{\alpha}(P_0,P_1)\overset{n_0,n_1\to \infty} \to \epsilon(P_0,P_1)\hspace{0.3cm} \alpha \in(0,2) , 
	\end{equation}
	
	\begin{equation}
	\hat{\gamma}_K^{2}(P_0,P_1)\overset{n_0,n_1\to \infty} \to \gamma_K^{2}(P_0,P_1).
	\end{equation}

	The limit distribution of these statistics is derived as a consequence of the central theorems for $U$ and $V$ statistics in the degenerate case \cite{korolyuk2013theory}  and can be found in the original works \cite{gretton2012kernel,szekely2004testing}. However, in practice, to calibrate the tests the boostrap/permutations methods are used \cite{gretton2012kernel,szekely2004testing}.

	\section{The proposed tests}
	
	In this section, the tests based on energy distance and kernel mean embedding will be extended to a context of right censoring. In this case, unlike the previous section, the statistics will be deducted first and then the theoretical properties will be derived.

	\subsection{The statistics}

	As before, let us suppose  $X_{ji}= min(T_{ji},C_{j,i})$  and $\delta_{ji}= 1\{X_{ji}= T_{ji}\}$	$(j=0,1; i=1,\dots,n_j)$ under the conditions of independence and regularity assumed in the section $ 2 $ on the variables $T_{ji},C_{j,i}$ $(j=0,1; i=1,\dots,n_j)$.

	For each group we consider their orderly sample $X_{0(1:n_0)}$, $X_{0(2:n_0)},\dots ,X_{0(n_0:n_0)}$,
	$X_{1(1:n_1)}$, $X_{1(2:n_1)},\dots ,X_{1(n_1:n_1)}$ and also for the corresponding censored indicators $\delta_{0(1:n_0)}$, $\delta_{0(2:n_0)},\dots \delta_{0(n_0:n_0)}$,
	$\delta_{1(1:n_1)}$, $\delta_{1(2:n_1)},\dots ,\delta_{1(n_1:n_1)}$.

	In a context of right censoring (under independence), the maximum non
	parametric 
	likelihood estimator is the Kaplan-Meier \cite{kaplan1958nonparametric} estimator instead of the empirical distribution. This estimator is consistent \cite{wang1987note} and for all $t>0 $,  converges asymptotically a normal distribution \cite{cai1998asymptotic}. One of its main characteristics is its negative bias \cite{stute1994bias}, which if it is a mechanism of censored is high it can become considerable. In \cite{stute1994bias} in fact, an exact expression is provided for the bias of the Kaplan-Meier integral $\int \phi d \hat{F}_n$, where $\hat{F}_n$ denotes Kaplan-Meier estimator.

	If we replace as plug-in, the empirical distribution by the Kaplan-Meier estimator in $(13)$ and $(14)$, we obtain the $V$ statistic for right censored data:
	
	\begin{align}
	\hat{\epsilon}_{\alpha}(P_0,P_1)= 2\sum_{i=1}^{n_0}  \sum_{j=1}^{n_1} W^{0}_{i:n_0} W^{1}_{i:n_1} ||X_{0i}-X_{1j}||^{\alpha} - \sum_{i=1}^{n_0} \sum_{j=1}^{n_0}  W^{0}_{i:n_0} W^{0}_{j:n_0}  ||X_{0i}-X_{0j}||^{\alpha} \\ - \sum_{i=1}^{n_1}  \sum_{j=1}^{n_1} W^{1}_{i:n_1} W^{1}_{j:n_1} ||X_{1i}-X_{1j}||^{\alpha} \nonumber,
	\end{align}
	\textbf{($V$ statistic energy distance under right censored)},
	\begin{align}
	\hat{\gamma}_K^{2}(P_0,P_1)=  \sum_{i=1}^{n_0}  \sum_{j=1}^{n_0} W^{0}_{i:n_0} W^{0}_{j:n_0} K(X_{0i},X_{0j})+ \sum_{i=1}^{n_1}  \sum_{j=1 i}^{n_1} W^{1}_{i:n_1} W^{1}_{j:n_1} K(X_{1i},X_{1j})\\-2\sum_{i=1}^{n_0}  \sum_{j=1}^{n_1} W^{0}_{i:n_0} W^{1}_{i:n_1} K(X_{0i},X_{1j}) \nonumber.
	\end{align}
	\textbf{($V$ statistic kernel method under right censored)}.

	% 2 \sum_{i=1}^n  \sum_{j=1}^m W^{1}_{i:n}W^{2}_{j:n} \| Z_{X_{i:n}} -Z_{Y_{j:m}} \|

	where
	
	\begin{equation}
	W^{0}_{{i:n_0}}= \frac{\delta_{0(i:n_0)}}{n_0-i+1}\prod_{j=1}^{i-1}[\frac{n_0-j}{n_0-j+1}]^{\delta_{0(i:n_0)}}
	\hspace{0.2cm}   (i=1,\dots,n_0)
	\end{equation}
	
	and

	\begin{equation}
	W^{1}_{{i:n_1}}= \frac{\delta_{1(i:n_1)}}{n_1-i+1}\prod_{j=1}^{i-1}[\frac{n_1-j}{n_1-j+1}]^{\delta_{1(i:n_1)}} \hspace{0.2cm}   (i=1,\dots,n_1),
	\end{equation}

	are the Kaplan-Meier integral weights \cite{stute1995statistical}.

	However, as the limit of each statistic has the following structure:

	\begin{align}
	\hat{\epsilon}_\alpha(P_0,P_1) \overset{n_0,n_1\to \infty}{\to} \epsilon_{c(\alpha)}(P_0,P_1)= 2\int_{0}^{\tau_0} \int_{0}^{\tau_1}	 ||x-y||^{\alpha} dP_0^{\prime}(x) dP_1^{\prime}(y)\\
	-\int_{0}^{\tau_0} \int_{0}^{\tau_0}	 ||x-y||^{\alpha} dP_0^{\prime}(x) dP_0^{\prime}(y)-\int_{0}^{\tau_1} \int_{0}^{\tau_1}	 ||x-y||^{\alpha} dP_1^{\prime}(x) dP_1^{\prime}(y) \nonumber,
	\end{align} 
	
	\begin{align}
	\hat{\gamma}_K(P_0,P_1) \overset{n_0,n_1\to \infty}{\to} \gamma_{c(K)}(P_0,P_1)= 
	\int_{0}^{\tau_0} \int_{0}^{\tau_0}	 K(x,y) dP_0^{\prime}(x) dP_0^{\prime}(y) \\ +\int_{0}^{\tau_1} \int_{0}^{\tau_1}	K(x,y) dP_1^{\prime}(x) dP_1^{\prime}(y)-2\int_{0}^{\tau_0} \int_{0}^{\tau_1}	 K(x,y) dP_0^{\prime}(x) dP_1^{\prime}(y) \nonumber,
	\end{align} 
	
	where usually, $\tau_0$, $\tau_1$ are less than the maximum support value of the random variables  $P_0$ and $P_1$ due to censorship. $P^{\prime}_0$ and $P^{\prime}_1$ take values in this domain with the same value that initial distribution $P_0$ and $P_1$, but in general they not are distribution functions in the previous domain of  integration. 
	
	As a consequence, there is no guarantee that the limit functions $\gamma_{c(K)}(P_0,P_1)$ and $\epsilon_{c(\alpha)}(P_0,P_1)$ are a function of distance between probability measures. Actually they are not, if $P^0$ is the distribution function of a random variables $N(100000,1)$,  $P^1$  of a $Uniform(0,1)$ and $\tau_0=\tau_1 =0.1$, then the value of $\epsilon_{c(\alpha)}(P_0, P_1)$ is negative. It is easy to verify that if $P_0 = P_1 $ and $\tau_0 = \tau_1 $, then the limit is zero, but also we can build an example of two different probability measures with zero distance, so this statistics will not be consistent against all alternatives.
	
	To solve this problem, we have to get $P^{\prime} _0 (x)$, $P^{\prime}_1 (x)$ to be distribution functions in the previous integration domain, that is achieved by  the previous functions, that is, $P^{\prime\prime}_0 (x)= P^{\prime} _0 (x)/ \int_{0}^{\tau_0} dP^{\prime} _0 (x)dx$ $\forall x\in [0,\tau_0]$, and  $P^{\prime\prime}_1 (x)= P^{\prime}_1 (x)/ \int_{0}^{\tau_1} dP^{\prime}_1 (x)dx $ $\forall x\in [0,\tau_1]$. In addition, for the consistency of the test against all alternatives as we will see later we must impose that $\tau_0= \tau_1 $ in the case that the  support of the distribution functions $P_0$ and $P_1$ is not contained in the intervals $ [0,\tau_0] $ and $[0,\tau_1] $ respectively.

	% In addition, it must be verified that both integration domains are equal. Therefore, in the following, we assume that $\tau _0 =\tau_1 $ is fulfilled (AÑADIR ).
	
	This leads to consider the $U$ statistics under right censored suggested in \cite {bose1999strong} and apply the aforementioned standardization for multisample $U$ statistic under right censoring \cite{stute1993multi}. The corresponding statistics are the following:

	\begin{align}
	\hat{\epsilon}_{\alpha}(P_0,P_1)= 2  \frac{\sum_{i=1}^{n_0}  \sum_{j=1}^{n_1} W^{0}_{i:n_0} W^{1}_{i:n_1} ||X_{0i}-X_{1j}||^{\alpha}}{\sum_{i=1}^{n_0}  \sum_{j\neq i}^{n_1} W^{0}_{i:n_0} W^{1}_{i:n_1}} - \frac{\sum_{i=1}^{n_0} \sum_{j\neq i}^{n_0}  W^{0}_{i:n_0} W^{0}_{j:n_0}  ||X_{0i}-X_{0j}||^{\alpha}}{\sum_{i=1}^{n_0} \sum_{j\neq i}^{n_0} W^{0}_{i:n_0} W^{0}_{j:n_0}} \\
	- \frac{\sum_{i=1}^{n_1}  \sum_{i\neq j}^{n_1} W^{1}_{i:n_1} W^{1}_{j:n_1}  ||X_{1i}-X_{1j}||^{\alpha}}{\sum_{i=1}^{n_1}  \sum_{j\neq i}^{n_1} W^{1}_{i:n_1} W^{1}_{j:n_1}} \nonumber
	\end{align}
	\textbf{($U$ statistic energy distance under right censoring)},
	\begin{align}
	\hat{\gamma}_K^{2}(P_0,P_1)=  \frac{\sum_{i=1}^{n_0}  \sum_{j\neq i}^{n_0} W^{0}_{i:n_0} W^{0}_{j:n_0} K(X_{0i},X_{0j})}{ \sum_{j=1}^{n_0} \sum_{j\neq i}^{n_0} W^{0}_{i:n_0} W^{0}_{j:n_0}}+ \frac{\sum_{i=1}^{n_1}  \sum_{j\neq i}^{n_1} W^{1}_{i:n_1} W^{1}_{j:n_1}  K(X_{1i},X_{1j})}{\sum_{i=1}^{n_1}  \sum_{j\neq i }^{n_1}W^{1}_{i:n_1} W^{1}_{j:n_1}}\\-2\frac{\sum_{i=1}^{n_0}  \sum_{j=1}^{n_1} W^{0}_{i:n_0} W^{1}_{i:n_1} K(X_{0i},X_{1j})}{\sum_{i=1}^{n_0}  \sum_{j=1}^{n_1} W^{0}_{i:n_0} W^{1}_{i:n_1}} \nonumber
	\end{align}
	\textbf{($U$ statistic kernel method under right censoring)}.

	Finally, we will use the following statistics
	
	\begin{equation}
	T_{\hat{\epsilon}_\alpha}= \frac{n_0n_1}{n_0+n_1} \hat{\epsilon}_{\alpha}(P_0,P_1) \text{   and   }    T_{\hat{\gamma}_K^{2}}= \frac{n_0n_1}{n_0+n_1} \hat{\gamma}_K^{2}(P_0,P_1) 
	\end{equation}

	to derive more easily, the consistency against all alternatives.

	\subsection{Permutation tests}

	As in the case without censorship, the null distribution of the statistics is calculate with  permutation methods. If the censorship mechanism of the two groups is the same,  the standard permutation methods  are valid  \cite{neuhaus1993conditional,wang2010testing}. However, when the censoring distributions differ, standard permutation methods do not work well for small-sample settings and/or when the amount of censoring is large \cite{heimann1998permutational}. In this case, we must use the resampling strategy proposed in  \cite{wang2010testing}.
	
	%Denoting by $Z=(\overbrace{0,\cdots,0}^{n_0},\overbrace{1,\cdots,1}^{n_1})$ the size vector $n$ that contains the group to which it belongs to each data, and by $U= (X_{01},\cdots, X_{0n_0},X_{11},\cdots ,X_{1n_1})$ and $\delta=(\delta_{01},\cdots ,\delta_{0n_0},\delta_{11},\cdots ,\delta_{1n_1})$ to the vectors of the same length that continue the observed times and the censorship indicator of each time. Considering a statistic $\theta (Z, U,\delta)$, the first step of  traditional permutations method consists in calculate the value  of each statistics permutade $\theta(Z^r,U,\delta)$ $(r = 1,2, \dots, \binom{n}{n_0})$  resulting the calculate $\binom{n}{n_0}$ permutations over the  index $(\{1,\dots,n\})$    in the following way: Calculating the $\binom{n}{n_0})$ different possible combinations on the  first  elements $n_0$ and assigning the $n-n0$ remaining indices to the other group. Finally we compare if the original statistici f less or equal $\theta(Z,U,\delta)$ with each $\theta(Z^r,U,\delta)$ $(r = 1,2, \dots, \binom{n}{n_0})$ . The p-value is calculated  of the following way:

	We denote by $Z=(\overbrace{0,\cdots,0}^{n_0},\overbrace{1,\cdots,1}^{n_1})$ a vector the size $n$ $(n=n_0+n_1)$ that contains the group to which it belongs to each data, and by $U= (X_{01},\cdots, X_{0n_0},X_{11},\cdots ,X_{1n_1})$ and $\delta=(\delta_{01},\cdots ,\delta_{0n_0},\delta_{11},\cdots ,\delta_{1n_1})$ to the vectors of the same length that contain the observed times and the censorship indicator of each time. Given a statistic $\theta (Z, U,\delta)$, the first step of  traditional permutations method consists in calculate the value  of each statistics for each permutation $\theta(Z^r,U,\delta)$ $(r = 1,2, \dots, \binom{n}{n_0})$.  Resulting each permutation of consider  $\binom{n}{n_0}$ combination over the  index $\{1,\dots,n\}$    in the following way:  the values  of $\binom{n}{n_0})$ different possible combinations are distributed to the    first  group  and assigned  the $n-n_0$ remaining index to the other group. Finally, we compare if $\theta(Z,U,\delta)$ is less or equal that $\theta(Z^r,U,\delta)$ $(r = 1,2, \dots, \binom{n}{n_0})$. The p-value is calculated  as follow:
	
	\begin{equation*}
	\text{p-value}=\frac{\sum_{r=1}^{\binom{n}{n_0}}1\{{\theta(Z^r,U,\delta)\geq \theta(Z,U,\delta)\}}}{\binom{n}{n_0}}.
	\end{equation*}
	
	In practice, only a a small number of permutations is considered in the approximation of the latest expression.

	%Denoting by $Z=(\overbrace{0,\cdots,0}^{n_0},\overbrace{1,\cdots,1}^{n_1})$ the size vector $n$ that contains the group to which it belongs to each data, and by $U= (X_{01},\cdots, X_{0n_0},X_{11},\cdots ,X_{1n_1})$ and $\delta=(\delta_{01},\cdots ,\delta_{0n_0},\delta_{11},\cdots ,\delta_{1n_1})$ to the vectors of the same length that continue the observed times and the censorship indicator of each time. Considering a statistic $\theta (Z, U,\delta)$, the first step of  traditional permutations method consists in calculate the value  of each statistics for each permutation $\theta(Z^r,U,\delta)$ $(r = 1,2, \dots, \binom{n}{n_0})$.  Resulting each permutation of consider  $\binom{n}{n_0}$ combination over the  index $\{1,\dots,n\}$    in the following way:  the values  of $\binom{n}{n_0})$ different possible combinations are distributed to the    first  group  and assigned  the $n-n_0$ remaining index to the other group. Finally, we compare if $\theta(Z,U,\delta)$ is less or equal that $\theta(Z^r,U,\delta)$ $(r = 1,2, \dots, \binom{n}{n_0})$. The p-value is calculated  as follow:
	%
	%\begin{equation*}
	%\text{p-value}=\frac{\sum_{r=1}^{\binom{n}{n_0}}1\{{\theta(Z^r,U,\delta)\geq \theta(Z,U,\delta)\}}}{\binom{n}{n_0}}.
	%\end{equation*}
	%
	%In practice, only a a small number of permutations is considered in the approximation of the latest expression.
	%

	\subsection{Theoretical properties}

	\subsubsection{Asymptotic  distribution}

	The theoretical results derived for the asymptotic convergence in distribution under null hypothesis of the statistics will be established only in the proofs for the case of kernel mean embeddings. As we have seen before (equation $(11)$) given the equivalence between the tests based on the kermel mean embeddings and the energy distance \cite{sejdinovic2013equivalence}  this is not restrictive.
	
	%Suppose $X_0=\{X_{0i}\}_{i=1}^{n_0} \distas{iid}P_0$ $X_1=\{X_{1i}\}_{i=1}^{n_1} \distas{iid}P_1$.
	%
	%\begin{align}
	%\hat{\gamma}_K^{2}(P_0,P_1)=  \frac{\sum_{i=1}^{n_0}  \sum_{j\neq i}^{n_0} W^{0}_{i:n_0} W^{0}_{j:n_0} K(X_{0i},X_{0j})}{ \sum_{j=1}^{n_0} \sum_{j\neq i}^{n_0} W^{0}_{i:n_0} W^{0}_{j:n_0}}+ \frac{\sum_{i=1}^{n_1}  \sum_{j\neq i}^{n_1} W^{1}_{i:n_1} W^{1}_{j:n_1}  K(X_{1i},X_{1j})}{\sum_{i=1}^{n_1}  \sum_{j\neq i }^{n_1}W^{1}_{i:n_1} W^{1}_{j:n_1}} \nonumber\\-2\frac{\sum_{i=1}^{n_0}  \sum_{j=1}^{n_1} W^{0}_{i:n_0} W^{1}_{i:n_1} K(X_{0i},X_{1j})}{\sum_{i=1}^{n_0}  \sum_{j=1}^{n_1} W^{0}_{i:n_0} W^{1}_{i:n_1}}
	%\end{align}

	We first transform each term in the previously sum by centering. Under the null hypothesis $P=P_0=P_1$ and $\tau_0=\tau_1$, $P^{\prime\prime}=P_0^{\prime\prime}=P_1^{\prime\prime}$ and we have the same mean embedding 
	
	$\mu_{P^{\prime\prime}}=\mu_{P_0^{\prime\prime}}=\mu_{P_1^{\prime\prime}}= \frac{1}{P(\tau_0)}\int_0^{\tau_0} K(\cdot,x) dP^{\prime \prime}(x) $. Thus if we replace each instance of $K(X_i,X_j)$ with a kernel $K^{*}(X_i,X_j)$ which the mean has been subtracted, 
	
	\begin{align*}
	K^{*}(X_i,X_j)= <\phi(X_i)-\mu_{P^{\prime\prime}},\phi(X_j)-\mu_{P^{\prime\prime}}>=  \\ K(X_i,X_j)-E_{X}(K(X_i.X))  -E_{X}(K(X,X_j))+E_{X,X^{\prime}}(K(X,X^{\prime})). 
	\end{align*}

	This gives the equivalent of the empirical $\hat{\gamma}_K^{2}(P_0^{\prime \prime},P_1^{\prime \prime})$

	\begin{align}
	\hat{\gamma}_K^{2}(P_0^{\prime\prime },P_1^{\prime\prime })=  \frac{\sum_{i=1}^{n_0}  \sum_{j\neq i}^{n_0} W^{0}_{i:n_0} W^{0}_{j:n_0} K^{*}(X_{0i},X_{0j})}{ \sum_{j=1}^{n_0} \sum_{j\neq i}^{n_0} W^{0}_{i:n_0} W^{0}_{j:n_0}}+ \frac{\sum_{i=1}^{n_1}  \sum_{j\neq i}^{n_1} W^{1}_{i:n_1} W^{1}_{j:n_1}  K^{*}(X_{1i},X_{1j})}{\sum_{i=1}^{n_1}  \sum_{j\neq i }^{n_1}W^{1}_{i:n_1} W^{1}_{j:n_1}} \nonumber\\-2\frac{\sum_{i=1}^{n_0}  \sum_{j=1}^{n_1} W^{0}_{i:n_0} W^{1}_{i:n_1} K^{*}(X_{0i},X_{1j})}{\sum_{i=1}^{n_0}  \sum_{j=1}^{n_1} W^{0}_{i:n_0} W^{1}_{i:n_1}}.
	\end{align}

	Note  that $K^{*}(\cdot,\cdot)$ is a degenerate kernel:
	
	$$ E_{X\distas{}P^{\prime\prime}}(K^{*}(X.y))= E_{X}(K(X,y)) + E_{X,X^{\prime}} K(X,X^{\prime}) -E_{X}(K(X,y))+E_{X,X^{\prime}} K(X,X^{\prime})=0 $$

	Then, in the terms 
	
	\begin{equation*}
	\frac{\sum_{i=1}^{n_0}  \sum_{j\neq i}^{n_0} W^{0}_{i:n_0} W^{0}_{j:n_0} K^{*}(X_{0i},X_{0j})}{ \sum_{j=1}^{n_0} \sum_{j\neq i}^{n_0} W^{0}_{i:n_0} W^{0}_{j:n_0}} \text{ and } \frac{\sum_{i=1}^{n_1}  \sum_{j\neq i}^{n_1} W^{1}_{i:n_1} W^{1}_{j:n_1}  K^{*}(X_{1i},X_{1j})}{\sum_{i=1}^{n_1}  \sum_{j\neq i }^{n_1}W^{1}_{i:n_1} W^{1}_{j:n_1}} \nonumber
	\end{equation*}

	we can apply the limits theorems for $U$ statistics under right censored data \cite{bose2002asymptotic,fernandez2018kaplan}. In particular we will use the results \cite{fernandez2018kaplan} due to the weakest conditions to apply the theorems, and also, for the conditions that are assumed in this work%$E(K(X_1,X_2))<\infty$ with $X_1,X_2\distas{i.i.d}P_0$, 
	it is proved in that same work that the theorems of asymptotic convergence are valid.

	By the Corollary $2.9$ \cite{fernandez2018kaplan}, under the null hyphotesis and $\tau_0= \tau_1$  we have:

	$$\frac{\sum_{i=1}^{n_0}  \sum_{j\neq i}^{n_0} W^{0}_{i:n_0} W^{0}_{j:n_0} K^{*}(X_{0i},X_{0j})}{ \sum_{j=1}^{n_0} \sum_{j\neq i}^{n_0} (W^{0}_{i:n_0} W^{0}_{j:n_0})}  \overset{D}{\to} c_1 + \psi$$

	and

	$$\frac{\sum_{i=1}^{n_1}  \sum_{j\neq i}^{n_1} W^{1}_{i:n_1} W^{0}_{j:n_1} K^{*}(X_{1i},X_{1j})}{ \sum_{j=1}^{n_1} \sum_{j\neq i}^{n_1} (W^{1}_{i:n_1} W^{1}_{j:n_1}})  \overset{D}{\to} c_2 + \psi$$

	where $\psi= \sum_{i=1}^{\infty} \lambda_i (\epsilon_i^{2}-1)$, with $\epsilon_{i}$ $iid$ standard normal random variables and $c_1$, $c_2$ are two constant specified in \cite{fernandez2018kaplan} that for our purpose are not irrelevant.

	The structure of the previous limits coincides with the case without censoring  in the degenerate case corresponds to $c+\psi$ \cite{korolyuk2013theory} where $c$ is a constant.

	However, for the term

	$$2\frac{\sum_{i=1}^{n_0}  \sum_{j=1}^{n_1} W^{0}_{i:n_0} W^{1}_{i:n_1} K(X_{0i},X_{1j})}{\sum_{i=1}^{n_0}  \sum_{j=1}^{n_1} W^{0}_{i:n_0} W^{1}_{i:n_1}} $$

	which is a U-statistic of two samples under right censored data there are still no theoretical results.

	The deduction of the theorems limits with $ U $ statistics in several samples extends the objectives of this work, and will be presented in another paper. In any case, the limit distribution coincides with the case with censorship. This is

	$$ \sqrt{n_0n_1}  \sum_{i=1}^{n_0}  \sum_{j=1}^{n_1} W^{0}_{i:n_0} W^{1}_{i:n_1} K(X_{0i},X_{1j}) \overset{D}{\to} \eta_{\infty}$$
	
	$$\eta_{\infty}=\sum_{j=1}^{\infty} \lambda_j \tau_j \epsilon_j,$$
	
	where $\{\tau_j\}$ and $\{\epsilon_j\}$ are two independence sequences of standart normal random variables.
	
	\subsubsection{Consistency against all alternatives}

	\begin{theorem}
		Let $S,A$ be an arbitrary metrics spaces with the same topology defined on $\mathbb{R^{+}}$ with $S$ contained on $A$ and let $\gamma(x,y)$ be a continuous, symmetric, real function on  $A\times A$. Suppose $X$,$X^{\prime}$, $Y$,$Y^{\prime}$ are independent $A$ random variables, $X$,$X^{\prime}$ and identically distributed, and $Y$,$Y^{\prime}$ are identically distributed. Suppose 
		$\gamma(X,X^{\prime})$, $\gamma(Y,Y^{\prime})$, and $\gamma(X,Y)$ have finite expected values on $A$. Then
		
		\begin{equation*}
		2\frac{\int_S \int_S \gamma(x,y)dP(x) dQ(y)}{\int_{S}dP(x)\int_{S}dQ(y)}-	\frac{\int_S \int_S \gamma(x,y)dP(x) dP(y)}{(\int_{S}dP(x))^{2}}-\frac{\int_S \int_S \gamma(x,y)dQ(x) dQ(y)}{(\int_{S}dQ(x))^{2}}\geq 0
		\end{equation*}
		if and only if $\phi$ is negative definite and where $P$ and $Q$ denote the distribution of $X$ and $Y$ respectively. If $\gamma$ is strictly negative then equality holds if and only if $X$ and $Y$ are identically distributed on $S$.   
	\end{theorem}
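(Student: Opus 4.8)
\emph{Proof strategy.} The plan is to collapse the three-term expression into a single quadratic form in a zero-mass signed measure and then recognize that form as the continuous version of the negative-type condition $(8)$. First I would normalize. Writing $a=\int_S dP$ and $b=\int_S dQ$ and passing to the probability measures $\tilde P = P/a$ and $\tilde Q = Q/b$ supported on $S$, the displayed quantity becomes
\begin{equation*}
E = 2\int_S\int_S \gamma(x,y)\, d\tilde P(x)\, d\tilde Q(y) - \int_S\int_S \gamma(x,y)\, d\tilde P(x)\, d\tilde P(y) - \int_S\int_S \gamma(x,y)\, d\tilde Q(x)\, d\tilde Q(y).
\end{equation*}
Setting $\mu = \tilde P - \tilde Q$, which satisfies $\mu(S)=0$, a direct expansion yields the key identity $E = -\int_S\int_S \gamma(x,y)\, d\mu(x)\, d\mu(y)$. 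Thus $E\geq 0$ is exactly the assertion that $\gamma$ integrates to a nonpositive value against every such zero-mass signed measure, which is the integral analogue of $(8)$.

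For the forward implication I would show that negative definiteness in the sense of $(8)$ forces $\int_S\int_S \gamma\, d\mu\, d\mu \leq 0$ for every finite signed measure $\mu$ with $\mu(S)=0$ and finite energy. The bridge is approximation: discretize $\tilde P$ and $\tilde Q$ by finitely supported measures $\tilde P_m,\tilde Q_m$ converging weakly to them, set $\mu_m = \tilde P_m - \tilde Q_m = \sum_i c_i \delta_{x_i}$ with $\sum_i c_i = 0$, apply $(8)$ to get $\int\int\gamma\, d\mu_m\, d\mu_m = \sum_{i,j} c_i c_j \gamma(x_i,x_j) \leq 0$, and pass to the limit. Continuity of $\gamma$ together with the finite-expectation hypotheses on $\gamma(X,X')$, $\gamma(Y,Y')$ and $\gamma(X,Y)$ supplies the integrability control needed to justify convergence of the double integrals, so $E = -\lim_m \int\int\gamma\, d\mu_m\, d\mu_m \geq 0$.

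For the converse I would argue directly using purely atomic measures. Given points $x_1,\dots,x_n\in S$ and reals $c_1,\dots,c_n$ with $\sum_i c_i = 0$, split $c_i = c_i^+ - c_i^-$; since the positive and negative parts carry equal total mass $m>0$, the measures $\tilde P = m^{-1}\sum_i c_i^+ \delta_{x_i}$ and $\tilde Q = m^{-1}\sum_i c_i^- \delta_{x_i}$ are probability measures on $S$ with $\tilde P - \tilde Q = m^{-1}\sum_i c_i \delta_{x_i}$. The hypothesis $E\geq 0$ then forces $\sum_{i,j} c_i c_j \gamma(x_i,x_j) = -m^2 E \leq 0$, which is precisely $(8)$, so $\gamma$ is negative definite. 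For the equality claim I interpret ``strictly negative'' as strictly negative definite, meaning $\int\int\gamma\, d\mu\, d\mu < 0$ for every nonzero zero-mass $\mu$; then $E=0$ is equivalent to $\mu = \tilde P - \tilde Q = 0$, i.e. $\tilde P = \tilde Q$, which is exactly the statement that $X$ and $Y$ are identically distributed on $S$.

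The main obstacle I anticipate is the approximation step in the forward direction: transferring the finite-sum condition $(8)$ to the integral inequality over a possibly non-compact $S\subseteq\mathbb{R}^{+}$. Guaranteeing that $\int\int\gamma\, d\mu_m\, d\mu_m \to \int\int\gamma\, d\mu\, d\mu$ requires controlling the tails, and this is precisely where the continuity of $\gamma$ and the finite-expectation assumptions are used: continuity governs the behaviour on compact sets through weak convergence, while the moment conditions dominate the contribution near infinity. Once this limiting argument is secured, every remaining step is an elementary manipulation of the quadratic form $-\int_S\int_S \gamma\, d\mu\, d\mu$.
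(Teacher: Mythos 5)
Your opening move coincides with the paper's: the paper's proof also renormalizes the restricted measures (it sets $dP^{\prime}=c_1\,dP$, $dQ^{\prime}=c_2\,dQ$ with $c_1=1/\int_S dP$, $c_2=1/\int_S dQ$) and checks that continuity, symmetry and the finite-expectation hypotheses survive restriction to $S$. After that the two arguments diverge completely: the paper simply applies Theorem 1 of \cite{szekely2005new} to the renormalized variables $X^{*},Y^{*}$ living on $S$, inheriting both the inequality equivalence and the equality characterization from that citation, whereas you re-prove the cited result from scratch via the identity $E=-\int_S\int_S\gamma\,d\mu\,d\mu$ with $\mu=\tilde P-\tilde Q$, discretization for the forward direction, and atomic measures for the converse. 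For the inequality part (``$\geq 0$ if and only if negative definite'') your route is sound: the quadratic-form identity correctly uses the symmetry of $\gamma$, the converse via the splitting $c_i=c_i^{+}-c_i^{-}$ is clean, and the forward approximation step, though only sketched, is the standard one and can be completed (or avoided altogether by using the Hilbert-space representation $\gamma(x,y)=\|\phi(x)-\phi(y)\|_{H}^{2}$ available for negative definite kernels, which disposes of the tail-control problem you flag as the main obstacle).

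The genuine gap is in the equality case. You explicitly ``interpret strictly negative as\dots $\int\int\gamma\,d\mu\,d\mu<0$ for every nonzero zero-mass $\mu$,'' and with that reading the claim ``equality holds iff $X$ and $Y$ are identically distributed on $S$'' becomes a tautology: $E=0$ iff $\mu=0$ is then true by definition. But in this paper's context the natural reading of ``strictly negative definite'' is the finite-sum notion parallel to condition $(8)$ (strict inequality in $\sum_{i,j}c_ic_j\gamma(x_i,x_j)\leq 0$ unless all $c_i$ vanish), and the entire substantive content of the equality statement is the upgrade from this pointwise strictness to strictness of the integral form against all zero-mass signed measures. That upgrade is not obtained by your limiting argument --- strict inequalities are not preserved under limits --- and it is not automatic in general: it is exactly the distinction between strict negative definiteness and what Lyons and Klebanov call strong negative type, and there are spaces where the two notions separate. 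The paper gets this implication for free from the citation; your proof, as written, does not get it at all, because the hard hypothesis has been redefined to be the conclusion. To close the gap you would need either to prove that finite-sum strictness of $\gamma$ implies the integral strictness in your setting (e.g.\ via injectivity of the mean embedding, or a Fourier argument as in the Euclidean case $\gamma(x,y)=\|x-y\|^{\alpha}$, $0<\alpha<2$), or to restate the theorem with the strong (integral) notion as the hypothesis.
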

	
	\begin{proof}
		%	In case $A$ is a metric space and $\gamma$ is continuos on $A\times A$. The function $\gamma$ is negative define \cite{szekely2005new} if 
		%	\begin{equation*}
		%	\int_{A}\int_{A} \gamma(x,y) r(x) ry(y) dQ(x) dQ(y) \leq 0
		%	\end{equation*}
		%	
		%	whenever $\int_{S}^{} r(x) dQ(x)=0$
		% 	
		
		By Theorem $1$ \cite{szekely2005new}, it is verified:

		\begin{equation}
		2\int_A \int_A \gamma(x,y)dP(x) dQ(y)-	\int_A	 \int_A \gamma(x,y)dP(x) dP(y)-\int_A \int_A \gamma(x,y)dQ(x) dQ(y)\geq 0,
		\end{equation}

		if and only if $\phi$ is negative definite. If $\gamma$ is strictly negative then equality holds if and only if $X$ and $Y$ are identically distributed on $A$.
		
		If we define the following random variables on $S$, $X^{*}$, $Y^{*}$ with distribution function $P^{\prime}$, $Q^{\prime}$ respectively as follow :
		
		$dP^{\prime}(x)= c_1 dP(x)$ and $dQ^{\prime}(x)= c_2 dP(x)$, where $c_1= \frac{1}{\int_{S} dP(x)}$ and $c_2= {\frac{1}{\int_{S} dQ(x)}}$, 
		And we consider his copies $X^{*\prime}$,$Y^{*\prime}$. As $\gamma(X,X^{\prime})$, $\gamma(Y,Y^{\prime})$, and $\gamma(X,Y)$ have finite expected values on $A$, then $\gamma(X^{*},X^{*\prime})$, $\gamma(Y^{*},Y^{*\prime})$, and $\gamma(X^{*},Y^{*})$ have finite expected values on $S$. Moreover, $\gamma(x,y)$ be a continuous, symmetric, real function on  $S\times S$.

		This leads:
		
		\begin{equation}
		2c_1c_2\int_S \int_S \gamma(x,y)dP(x) dQ(y)-	c_1^{2}\int_S \int_S \gamma(x,y)dP(x) dP(y)-c_2^{2}\int_S \int_S \gamma(x,y)dQ(x) dQ(y)\geq 0.
		\end{equation}
		
		if and only if $\phi$ is negative definite,  and 
		
		\begin{equation}
		2c_1c_2\int_S \int_S \gamma(x,y)dP(x) dQ(y)-	c_1^{2}\int_S \int_S \gamma(x,y)dP(x) dP(y)-c_2^{2}\int_S \int_S \gamma(x,y)dQ(x) dQ(y)= 0.
		\end{equation}

		if $X^{*}$ and $Y^{*}$ are identically distributed on $A$ (with $\phi$  strictly negative) or equivalent $X$ and $Y$ are equally distributed on $S$.

	\end{proof}

	\begin{theorem}
		
		Let $X_{ji}= min(T_{ji},C_{j,i})\distas{iid} P_{c(j)} $ and  $\delta_{ji}= 1\{X_{ji}= T_{ji}\}$	$(j=0,1; i=1,\dots,n_j)$ with $P_{c(j)}$ $(j=0,1)$ and under the conditions of  assumed in the section $2$ on the variables $T_{ji}\distas{iid} P_{j},C_{j,i}\distas{iid} Q_{j}$ $(j=0,1; i=1,\dots,n_j)$. Then:

		\begin{eqnarray}
		\hat{\epsilon}_\alpha(P_0,P_1) \overset{n_0,n_1\to \infty}{\to} \epsilon_{c(\alpha)}(P_0,P_1)= 2\frac{\int_{0}^{\tau_0} \int_{0}^{\tau_1}	 ||x-y||^{\alpha} dP_0^{\prime}(x) dP_1^{\prime}(y)}{\int_{0}^{\tau_0} \int_{0}^{\tau_1} dP_0^{\prime}(x) dP_1^{\prime}(y)}\\
		-\frac{\int_{0}^{\tau_0} \int_{0}^{\tau_0}	 ||x-y||^{\alpha} dP_0^{\prime}(x) dP_0^{\prime}(y)}{\int_{0}^{\tau_0} \int_{0}^{\tau_0} dP_0^{\prime}(x) dP_0^{\prime}(y)} -\frac{\int_{0}^{\tau_1}\int_{0}^{\tau_1}	 ||x-y||^{\alpha} dP_1^{\prime}(x) dP_1^{\prime}(y)}{\int_{0}^{\tau_1}\int_{0}^{\tau_1} dP_1^{\prime}(x) dP_1^{\prime}(y)},
		\end{eqnarray} 
		
		\begin{eqnarray}
		\hat{\gamma}_K(P_0,P_1) \overset{n_0,n_1\to \infty}{\to} \gamma_{c(K)}(P_0,P_1)= 2  \frac{\int_{0}^{\tau_0} \int_{0}^{\tau_1}	 K(x,y) dP_0^{\prime}(x) dP_1^{\prime}(y)}{\int_{0}^{\tau_0} \int_{0}^{\tau_1} dP_0^{\prime}(x) dP_1^{\prime}(y)}\\
		-\frac{\int_{0}^{\tau_0} \int_{0}^{\tau_0}	 K(x,y) dP_0^{\prime}(x) dP_0^{\prime}(y)}{\int_{0}^{\tau_0} \int_{0}^{\tau_0}	  dP_0^{\prime}(x) dP_0^{\prime}(y)}-\frac{\int_{0}^{\tau_1}\int_{0}^{\tau_1}	K(x,y) dP_1^{\prime}(x) dP_1^{\prime}(y)}{\int_{0}^{\tau_1}\int_{0}^{\tau_1} dP_1^{\prime}(x) dP_1^{\prime}(y)},
		\end{eqnarray}

		where
		
		$P_0^{\prime}(x)= \left\{ \begin{array}{lcc}
		P_0(x) &   if  & x < \tau_{0} \\
		\\ P_0(\tau_0^{-})+1\{\tau_0\in A^{1}\}P_0(\tau_0) &  if & x \geq \tau_0 \\
		\end{array}
		\right.$
		
		and
		
		$P_1^{\prime}(x)= \left\{ \begin{array}{lcc}
		P_1(x) &   if  & x < \tau_{1} \\
		\\ P_1(\tau_1^{-})+1\{\tau_1\in A^{1}\}P_1(\tau_1) &  if & x \geq \tau_1. \\
		
		\end{array}
		\right.$
		
		Here,
		$\tau_0=\inf\{x:1-P_{c(0)}(x)=0\}$, $\tau_1=\inf\{x:1-P_{c(1)}(x)=0\}$, $A^{0}=\{x\in \mathbb{R}| P_{c(0)}\{x\}>0\}$ and $A^{1}=\{x\in \mathbb{R}| P_{c(1)}\{x\}>0\}$. 
	\end{theorem}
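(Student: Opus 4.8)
The plan is to recognize each doubly-indexed sum as a one- or two-sample Kaplan--Meier $U$-statistic and to pass to the limit term by term using the strong laws for Kaplan--Meier integrals. Recall that the weights $W^{0}_{i:n_0}$ and $W^{1}_{i:n_1}$ defined in $(23)$--$(24)$ are exactly the jumps of the Kaplan--Meier estimators $\hat{F}_{0}$ and $\hat{F}_{1}$ at the ordered observed times. Consequently, for any integrable $\phi$ the single sum $\sum_{i} W^{0}_{i:n_0}\phi(X_{0i})$ is the Kaplan--Meier integral $\int \phi\, d\hat{F}_{0}$, and the double sum $\sum_{i\neq j} W^{0}_{i:n_0} W^{0}_{j:n_0}\,h(X_{0i},X_{0j})$ is the corresponding bivariate Kaplan--Meier $U$-statistic. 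The denominators in $(27)$--$(28)$ are precisely these sums with the integrand replaced by $1$, so they measure the total mass placed by the product estimator; because of censoring at the right tail this mass is in general strictly below one, which is exactly why the normalization is needed.

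First I would treat the denominators. By Stute's strong law for Kaplan--Meier integrals \cite{stute1995statistical}, $\sum_{i} W^{0}_{i:n_0} \to \int_{0}^{\tau_0} dP^{\prime}_{0}(x) = P^{\prime}_{0}(\tau_0)$ almost surely, and likewise for group $1$; here the limit sub-distribution $P^{\prime}_{0}$ coincides with $P_{0}$ on $[0,\tau_0)$ and carries the endpoint mass $P_0(\tau_0^{-}) + 1\{\tau_0 \in A^{1}\}P_0(\tau_0)$ prescribed in the statement, which is the standard description of the measure to which the Kaplan--Meier estimator converges (atoms at the endpoint are recovered only in the censoring configuration captured by that indicator). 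The same strong law identifies the within-sample double sums: $\sum_{i\neq j} W^{0}_{i:n_0} W^{0}_{j:n_0}\,\|X_{0i}-X_{0j}\|^{\alpha} \to \int_{0}^{\tau_0}\!\int_{0}^{\tau_0}\|x-y\|^{\alpha}\,dP^{\prime}_{0}(x)\,dP^{\prime}_{0}(y)$ almost surely, and analogously for group $1$ and for the kernel $K$.

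Next I would handle the cross-sample term $\sum_{i,j} W^{0}_{i:n_0} W^{1}_{j:n_1}\,\|X_{0i}-X_{1j}\|^{\alpha}$. Since the two samples are independent, this is a two-sample (multisample) Kaplan--Meier $U$-statistic, and its almost-sure limit $\int_{0}^{\tau_0}\!\int_{0}^{\tau_1}\|x-y\|^{\alpha}\,dP^{\prime}_{0}(x)\,dP^{\prime}_{1}(y)$ follows from the multisample strong law under right censoring \cite{stute1993multi,bose1999strong}. Combining the numerator and denominator convergences through the continuous mapping theorem (the denominators converge to the strictly positive limits $P^{\prime}_{0}(\tau_0)$, $P^{\prime}_{1}(\tau_1)$ and their product) yields that each of the three ratios in $(27)$ converges almost surely to the corresponding normalized double integral, which is exactly $\epsilon_{c(\alpha)}(P_0,P_1)$ as displayed; the kernel statement then follows verbatim with $\|x-y\|^{\alpha}$ replaced by $K(x,y)$, giving $\gamma_{c(K)}(P_0,P_1)$.

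The step I expect to be the main obstacle is verifying the integrability hypotheses that make the Kaplan--Meier strong laws applicable to the integrands $\|x-y\|^{\alpha}$ and $K(x,y)$. Stute's results require a moment/integrability condition on the integrand relative to a censoring-adjusted weighting; the elementary inequality $\|x-y\|^{\alpha}\leq 2^{\alpha-1}(\|x\|^{\alpha}+\|y\|^{\alpha})$ reduces the bivariate integrand to marginal moments of order $\alpha$, so for $\alpha \in (0,2]$ the Section~$2$ assumptions $E(T_{ji}^{2})<\infty$ and $E(C_{ji}^{2})<\infty$ supply exactly what is needed, while for the bounded characteristic kernels of Table~$2$ the condition is immediate. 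The second delicate point is invoking a genuine \emph{two-sample} $U$-statistic law under censoring for the cross term rather than a one-sample one: here one leans on the independence of the two groups and on \cite{stute1993multi,bose1999strong}, and one must check that the same endpoint truncations at $\tau_0$ and $\tau_1$ govern the limiting product measure $dP^{\prime}_{0}\,dP^{\prime}_{1}$.
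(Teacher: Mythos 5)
Your proposal follows essentially the same route as the paper's proof: term-by-term almost-sure limits via the Kaplan--Meier strong laws (the multisample law of \cite{stute1993multi} for the cross term and the one-sample censored $U$-statistic law of \cite{bose1999strong} for the within-group terms), with the denominators treated as the same statistics with integrand $h\equiv 1$ and the three ratios combined by Slutsky/continuous-mapping arguments. Your explicit verification of the integrability hypotheses and of the strict positivity of the limiting denominators adds rigor that the paper glosses over, but the structure of the argument is identical.
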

	\begin{proof}
		
		%	La  demostración consiste en aplicar reiteradamente las strong laws of large numbers   for  U estadísticos Kaplan Meier de  con varias muestras de Stute con las strong converge el U-statistic	of
		%	degree two  for	randomly  censored de Bose junto con  el teorema de Slutsky  \cite{van2000asymptotic}. 
		%	

		The proof consists of repeatedly applying the strong laws of large numbers for $U$ Kaplan Meier statistics with two samples \cite{stute1993multi}, with the convergence results for $U$ statistic of degree two for randomly censored \cite{bose1999strong}. 
		%along with the Slutsky's   theorem \cite{van2000asymptotic}.
		
		By \cite{stute1993multi} we know that

		$$ \sum_{i=1}^{n_1} \sum_{j=1}^{n_1} W^{0}_{i:n_0} W^{1}_{i:n_1} h(X_{0i},X_{1j}) \overset{n_0,n_1\to \infty}{\to} \int_{0}^{\tau_0} \int_{0}^{\tau_1} h(x,y) dP_0^{\prime}(x) dP_1^{\prime}(y) $$
		
		where $h$ is a given kernel of degree two such that
		
		$$ \int h(x,y) dP_0^{\prime}(x) dP_1^{\prime}(y)< \infty $$

		Note that by hypothesis that $ P_{c(j)}$ $(j = 0,1) $ is continuous distribution function implies that $A^{0}$ and $A^{1}$ are empty set and therefore $P^{\prime}_0(x)= P_0(x)$ $\forall \in[0,\tau_0]$  and $P^{\prime}_1(x)= P_1(x)$ $\forall\in[0,\tau_1]$

		Applying the previous result with $h(x,y)=1$, along with the properties of convergence in probability,  we have:
		
		$$    \frac{\sum_{i=1}^{n_1} \sum_{j=1}^{n_1} W^{0}_{i:n_0} W^{1}_{i:n_1} h(X_{0i},X_{1j})}{\sum_{i=1}^{n_1} \sum_{j=1}^{n_1} W^{0}_{i:n_0} W^{1}_{i:n_1}} \overset{n_0,n_1\to \infty}{\to} \frac{\int_{0}^{\tau_0} \int_{0}^{\tau_1} h(x,y) dP_0^{\prime}(x) dP_1^{\prime}(y)}{\int_{0}^{\tau_0} \int_{0}^{\tau_1}  dP_0^{\prime}(x) dP_1^{\prime}(y)} $$

		Using the theorem $1$ of \cite{bose1999strong},  it is verified also

		$$\frac{\sum_{i=1}^{n_0}  \sum_{j\neq i}^{n_0} W^{0}_{i:n_0} W^{0}_{j:n_0} h(x,y)}{\sum_{i=1}^{n_0}  \sum_{j\neq i}^{n_0} W^{0}_{i:n_0} W^{0}_{j:n_0}} \overset{n_0 \to \infty}{\to} \frac{\int_{0}^{\tau_0} \int_{0}^{\tau_0} h(x,y)	  dP_0^{\prime}(x) dP_0^{\prime}(y)}{\int_{0}^{\tau_0} \int_{0}^{\tau_0} dP_0^{\prime}(x) dP_0^{\prime}(y)} $$

		and

		$$\frac{\sum_{i=1}^{n_1}  \sum_{j\neq i}^{n_1} W^{1}_{i:n_1} W^{1}_{j:n_1} h(x,y)}{\sum_{i=1}^{n_1}  \sum_{j\neq i}^{n_1} W^{1}_{i:n_1} W^{1}_{j:n_1}} \overset{n_1 \to \infty}{\to} \frac{\int_{0}^{\tau_1} \int_{0}^{\tau_1} h(x,y)	  dP_1^{\prime}(x) dP_1^{\prime}(y)}{\int_{0}^{\tau_1} \int_{0}^{\tau_1} dP_1^{\prime}(x) dP_1^{\prime}(y)} $$.
		
		Finally taking as $h(x,y)= ||x-y||^{\alpha}$ or $h(x,y)=K(x,y)$ and applying the properties of convergence   in probability of the sum of two random variables, the desired result is obtained.

	\end{proof}

	\begin{theorem}
		Let $X_{ji}= min(T_{ji},C_{j,i})\distas{iid} P_{c(j)} $ and  $\delta_{ji}= 1\{X_{ji}= T_{ji}\}$	$(j=0,1; i=1,\dots,n_j)$ with $P_{c(j)}$ $(j=0,1)$ under the conditions of independence assumed in the section $2$ on the variables $T_{ji}\distas{iid} P_{j},C_{j,i}\distas{iid} Q_{j}$ $(j=0,1; i=1,\dots,n_j)$. Also let's suppose that $\tau_0=\tau_1$ or the  support of the distribution functions $P_0$ and $P_1$ is  contained in the intervals $ [0,\tau_0] $ and $[0,\tau_1] $ respectively. Then, the statistics $T_{\hat{\epsilon}_\alpha}$   $T_{\hat{\gamma}_K^{2}}$    determines a test of the hypothesis of equal distributions  that is consistent against all fixed  alternatives with continuos random variables.

		\begin{proof}

			We assume without any restriction that $P_0$ and $ P_1$ have the same support (otherwise it is enough to extend the probability measure with less support to the higher one). If $\tau_0= \tau_1$ we can  apply theorem $1$ and then we have guaranteed:

			\begin{align}
			\lim_{n_0\to \infty,n_1 \to \infty} \hat{\epsilon}_\alpha(P_0,P_1)= 
			2\frac{\int_{0}^{\tau_0} \int_{0}^{\tau_1}	 ||x-y||^{\alpha} dP_0^{\prime}(x) dP_1^{\prime}(y)}{\int_{0}^{\tau_0} \int_{0}^{\tau_1} dP_0^{\prime}(x) dP_1^{\prime}(y)}\\
			-\frac{\int_{0}^{\tau_0} \int_{0}^{\tau_0}	 ||x-y||^{\alpha} dP_0^{\prime}(x) dP_0^{\prime}(y)}{\int_{0}^{\tau_0} \int_{0}^{\tau_0} dP_0^{\prime}(x) dP_0^{\prime}(y)} -\frac{\int_{0}^{\tau_1}\int_{0}^{\tau_1}	 ||x-y||^{\alpha} dP_1^{\prime}(x) dP_1^{\prime}(y)}{\int_{0}^{\tau_1}\int_{0}^{\tau_1} dP_1^{\prime}(x) dP_1^{\prime}(y)} \geq 0 \nonumber
			\end{align}
			
			%	
			%	\begin{eqnarray}
			%	\lim_{n_0\to \infty,n_1 \to \infty} \hat{\gamma}_K(P_0,P_1)= \frac{\int_{0}^{\tau_0} \int_{0}^{\tau_0}	 K(x,y) dP_0^{\prime}(x) dP_0^{\prime}(y)}{\int_{0}^{\tau_0} \int_{0}^{\tau_0}	  dP_0^{\prime}(x) dP_0^{\prime}(y)} \\+\frac{\int_{0}^{\tau_1}\int_{0}^{\tau_1}	K(x,y) dP_1^{\prime}(x) dP_1^{\prime}(y)}{\int_{0}^{\tau_1}\int_{0}^{\tau_1} dP_1^{\prime}(x) dP_1^{\prime}(y)}  -2   \frac{\int_{0}^{\tau_0} \int_{0}^{\tau_1}	 K(x,y) dP_0^{\prime}(x) dP_1^{\prime}(y)}{\int_{0}^{\tau_0} \int_{0}^{\tau_1} dP_0^{\prime}(x) dP_1^{\prime}(y)} \geq 0
			%	\end{eqnarray}
			%		
			
			\begin{align}
			\lim_{n_0\to \infty,n_1 \to \infty} \hat{\gamma}_K(P_0,P_1)= \frac{\int_{0}^{\tau_0} \int_{0}^{\tau_0}	 K(x,y) dP_0^{\prime}(x) dP_0^{\prime}(y)}{\int_{0}^{\tau_0} \int_{0}^{\tau_0}	  dP_0^{\prime}(x) dP_0^{\prime}(y)} \\
			+\frac{\int_{0}^{\tau_1}\int_{0}^{\tau_1}	K(x,y) dP_1^{\prime}(x) dP_1^{\prime}(y)}{\int_{0}^{\tau_1}\int_{0}^{\tau_1} dP_1^{\prime}(x) dP_1^{\prime}(y)}  -2   \frac{\int_{0}^{\tau_0} \int_{0}^{\tau_1}	 K(x,y) dP_0^{\prime}(x) dP_1^{\prime}(y)}{\int_{0}^{\tau_0} \int_{0}^{\tau_1} dP_0^{\prime}(x) dP_1^{\prime}(y)} \geq 0  \nonumber
			\end{align}

			%	$$	2\frac{\int_{0}^{\tau_0} \int_{0}^{\tau_1}	 ||x-y||^{\alpha} dP_0^{\prime}(x) dP_1^{\prime}(y)}{\int_{0}^{\tau_0} \int_{0}^{\tau_1} dP_0^{\prime}(x) dP_1^{\prime}(y)}\\
			%		-\frac{\int_{0}^{\tau_0} \int_{0}^{\tau_0}	 ||x-y||^{\alpha} dP_0^{\prime}(x) dP_0^{\prime}(y)}{\int_{0}^{\tau_0} \int_{0}^{\tau_0} dP_0^{\prime}(x) dP_0^{\prime}(y)} -\frac{\int_{0}^{\tau_1}\int_{0}^{\tau_1}	 ||x-y||^{\alpha} dP_1^{\prime}(x) dP_1^{\prime}(y)}{\int_{0}^{\tau_1}\int_{0}^{\tau_1} dP_1^{\prime}(x) dP_1^{\prime}(y)} \geq 0 $$
			%		
			%		
			%		
			%		
			%	$$	 
			%		\frac{\int_{0}^{\tau_0} \int_{0}^{\tau_0}	 K(x,y) dP_0^{\prime}(x) dP_0^{\prime}(y)}{\int_{0}^{\tau_0} \int_{0}^{\tau_0}	  dP_0^{\prime}(x) dP_0^{\prime}(y)}+\frac{\int_{0}^{\tau_1}\int_{0}^{\tau_1}	K(x,y) dP_1^{\prime}(x) dP_1^{\prime}(y)}{\int_{0}^{\tau_1}\int_{0}^{\tau_1} dP_1^{\prime}(x) dP_1^{\prime}(y)}  -2   \frac{\int_{0}^{\tau_0} \int_{0}^{\tau_1}	 K(x,y) dP_0^{\prime}(x) dP_1^{\prime}(y)}{\int_{0}^{\tau_0} \int_{0}^{\tau_1} dP_0^{\prime}(x) dP_1^{\prime}(y)} \geq 0$$ 
			
			and giving the equality to zero if and only if $P_0(t)= P_1(t)$ $\forall t \in [0,t_1]$
			
			%	La  demostración consiste en aplicar reiteradamente las strong laws of large numbers   for  U estadísticos Kaplan Meier de  con varias muestras de Stute con las strong converge el U-statistic	of
			%	degree two  for	randomly  censored de Bose junto con  el teorema de Slutsky  \cite{van2000asymptotic}. 
			%		

			%		Let $c_{\beta,n_0,n_1}$ denote the constant satisfying under the null hypothesis 	P($\hat{\epsilon}_{\alpha,n_0,n_1}(P_0,P_1) \geq  c_{\beta,n_0,n_1})=\beta $. The test is consistent against all alternatives if $ \exists  t\in [0,t_1]$ such that  $P_0 (t) \neq= P_1 (t)$, $\lim_{n\to \infty} P(\hat{\epsilon}_{\alpha,n_0,n_1}(P_0,P_1) \geq  c_{\alpha,n_0,n_1}) = 1$.  To prove consistency it is necessary that the sequence of critical $c_{\beta,n_0,n_1}$ for any fixed  $\beta$ is bounded above a finite constant $k_{\beta}$ that not depend on $n_0$ and $n_1$.	The existence of $k_{\beta}$ follows from the existence of a finite upper bound for the expected value of $\hat{\epsilon}_{\alpha,n_0,n_1}(P_0,P_1)$ that such an upper bound exists. 

			Suppose $\exists t \in [0,\tau_1]$ $P_0 (t) \neq P_1 (t)$, then we have strictly	inequality in $(36,37)$, so  with probability one  $\lim_{n_0\to \infty,n_1 \to \infty} P(\hat{\epsilon}_{\alpha}(P_0,P_1) = c_{\epsilon_\alpha}>0)=1$ $\lim_{n_0\to \infty,n_1 \to \infty} P(\hat{\gamma}_K(P_0,P_1)= c_K>0)=1$.  
			By the theory of degenerate $U$-statistics  under the null hyphotesis there exists a constants $c_{\alpha_1}$ and  $c_{\alpha_2}$  satisfying

			\begin{equation*}
			\lim_{n\to \infty} P(\frac{n_0 n_1}{n_{0}+n_{1}} \hat{\epsilon}_{\alpha}(P_0,P_1) > c_{\alpha_1})= \alpha  \text{ and } \lim_{n\to \infty} P(\frac{n_0n_1}{n_{0}+n_1} \hat{\gamma}_K(P_0,P_1) > c_{\alpha_2})=\alpha.
			\end{equation*} 
			
			Under the alternative hypothesis
			
			\begin{equation*}
			\lim_{n\to \infty} P(\frac{n_0 n_1}{n_{0}+n_{1}} \hat{\epsilon}_{\alpha}(P_0,P_1) > c_{\alpha_1})=1  \text{ and } \lim_{n\to \infty} P(\frac{n_0n_1}{n_{0}+n_1} \hat{\gamma}_K(P_0,P_1) > c_{\alpha_2})=1.
			\end{equation*}

			since $n\hat{\epsilon}_{\alpha}(P_0,P_1) \to \infty$ and $ n\hat{\gamma}_K(P_0,P_1)$ with probabiliy one as $n\to \infty$.

			%$c_{\beta_\epsilon}$ and $c_{\beta_K}$ such that $\lim_{n_0\to \infty,n_1 \to \infty} P(\hat{\epsilon}_\alpha(P_0,P_1)>c_{\beta_\epsilon})=\beta$ and $\lim_{n_0\to \infty,n_1 \to \infty} P(\hat{\gamma}_K(P_0,P_1)>c_{\beta_K})=\beta$ with the property $c_{\beta_\epsilon}\to 0$ and $c_{\beta_K}\to 0$. Then, under the alternative hypothesis 		
			%%		
			%%		
			%%		$\lim_{n_0\to \infty,n_1 \to \infty} P(\hat{\epsilon}_\alpha(P_0,P_1)>c_{\beta_\epsilon})=1$
			%%		
			%		

			In the case $\tau_0 \neq \tau_1$ the  support of the distribution functions $P_0$ and $P_1$ is  contained in the intervals $ [0,\tau_0] $ and $[0,\tau_1]$ and in this situation the normalization constants are $1$, and then, the previous argument is going to be true.

		\end{proof}    
		
	\end{theorem}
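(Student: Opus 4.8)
The plan is to combine the almost-sure convergence established in Theorem~2 with the non-negativity characterization of Theorem~1, and then invoke the degenerate $U$-statistic theory recalled in the previous subsection to calibrate the level. First I would reduce to the situation where $P_0$ and $P_1$ share a common support, extending the measure with smaller support if necessary, and then treat separately the two scenarios permitted by the hypothesis: $\tau_0=\tau_1$, and $\tau_0\neq\tau_1$ with the supports of $P_0,P_1$ already contained in $[0,\tau_0]$ and $[0,\tau_1]$.

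In the case $\tau_0=\tau_1=:\tau$, Theorem~2 guarantees that $\hat{\epsilon}_\alpha(P_0,P_1)$ and $\hat{\gamma}_K(P_0,P_1)$ converge almost surely to the normalized population limits $\epsilon_{c(\alpha)}(P_0,P_1)$ and $\gamma_{c(K)}(P_0,P_1)$. The key step is to recognize these limits as instances of the left-hand side of Theorem~1: take $A$ to be the common support, $S=[0,\tau]$, $P=P_0'$, $Q=P_1'$, and $\gamma(x,y)=\|x-y\|^{\alpha}$ (respectively $\gamma=K$). Since $\|x-y\|^{\alpha}$ is strictly negative definite for $\alpha\in(0,2)$ and $K$ is assumed characteristic, Theorem~1 yields that both limits are $\geq 0$, with equality if and only if the normalized restrictions $P_0''$ and $P_1''$ coincide on $S$. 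Because the truncation points agree, this is exactly equivalent to $P_0(t)=P_1(t)$ for all $t\in[0,\tau]$, and hence, given the common support, to $H_0$.

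To conclude consistency I would argue as follows. Under $H_0$ the common limit is zero and the statistics are degenerate $U$-statistics; by the limit theory recalled above, $T_{\hat{\epsilon}_\alpha}$ and $T_{\hat{\gamma}_K^{2}}$ converge to a non-degenerate law of the form $c+\psi$, so there exist finite critical values $c_{\alpha_1},c_{\alpha_2}$ giving asymptotic level $\alpha$. Under any fixed alternative, Theorem~1 forces the population limit to be a strictly positive constant, so $\hat{\epsilon}_\alpha(P_0,P_1)\to c_{\epsilon_\alpha}>0$ and $\hat{\gamma}_K(P_0,P_1)\to c_K>0$ almost surely; multiplying by the diverging factor $\tfrac{n_0n_1}{n_0+n_1}$ sends both test statistics to $+\infty$, so they exceed any fixed threshold with probability tending to one. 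This is precisely consistency against all fixed continuous alternatives. The remaining case $\tau_0\neq\tau_1$ with contained supports is lighter: there the normalizing constants $\int_0^{\tau_j}dP_j'$ equal one, so $P_j'=P_j$ are genuine distributions and the limits reduce to the ordinary energy distance and squared MMD, for which the same non-negativity and equality argument applies verbatim.

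The step I expect to be most delicate is the identification of the limit with Theorem~1 together with its equality case. One must verify that the normalization converting $P_j'$ into a probability measure $P_j''$ does not disturb the non-negativity, and, crucially, that the requirement $\tau_0=\tau_1$ (or fully contained supports) is exactly what makes \emph{equality on $[0,\tau]$} equivalent to \emph{$P_0=P_1$}; without this matching of truncation points the Gaussian-versus-uniform counterexample discussed earlier shows that strict positivity under $H_a$ can fail, and the test would no longer be consistent. The strict negative definiteness of the distance kernel and the characteristic property of $K$ are what upgrade ``$\geq 0$'' to ``$>0$ unless $P_0=P_1$'', and so they are indispensable to this final step.
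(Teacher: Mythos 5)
Your proposal follows essentially the same route as the paper's own proof: the same reduction to a common support, the same case split on $\tau_0=\tau_1$ versus contained supports, the same combination of Theorem~2 (almost-sure convergence to the normalized limits) with Theorem~1 (non-negativity and the equality case) to get strict positivity under any fixed alternative, and the same appeal to degenerate $U$-statistic theory under $H_0$ together with divergence of $\tfrac{n_0n_1}{n_0+n_1}$ times a positive constant to conclude consistency. Your write-up is in fact somewhat more explicit than the paper's (in identifying how Theorem~1 is instantiated and why strict negative definiteness or the characteristic property is needed), but the argument is the same.
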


	%\subsubsection{Consistency of permutation test}

	\section{Simulation study}
	
	The simulation study is divided into two phases. In the first, the performance of the new tests proposed under the null hypothesis is compared with the logrank family tests with different censorship rates and different sample size. In particular, the tests used are the energy distance (with $\alpha \in\{0.4,0.8,1,1,2,1.6\} $), gaussian kernel $ (\sigma=1$), laplacian kernel $(\sigma=1$), rational quadratic ($c=\beta=1 $ and $c=\beta=2 $),
	log-rank, Gehan generalized Wilcoxon test, Tarone-Ware, Peto-Peto, Fleming $\&$ Harrington (with $\rho=\gamma=1$). For this purpose, parametric distributions such as normal, exponential or lognormal are used. In the second phase, the same tests are compared where the null hypothesis is not true, in different scenarios: proportional  hazard ratio, cure, multimodality, and delayed effect.We use    different censorship mechanisms for each case and we vary the sample size $n$ ($n\in\{20,50,100\}$).
	%	We use different censorship mechanisms for each case and we vary the sample size.
	%	
	
	All the tests  are executed on the statistical software R. For the family of the logrank test the coin package \cite{hothorn2008implementing} is  used, while the new tests have been implemented in C++, and integrating them in R with the Rcpp \cite{eddelbuettel2011rcpp}, and Rcpp Armadillo libraries. In both cases the tests are calibrated by the permutations method, performing $1000$ repetitions for our tests.
	
	\subsection{Null hyphotesis}
	
	%En este caso,  simulamos  500 veces dos muestras  en los que en la hipótesis nula es correcta con censoring rates $10$ y $30$ y sample size  of 20 and 50 individuals.  Como bajo la hipotesis nula p-value. La media de los p-values obtenidos deberían ser próximos a 0.5, y la desviación típica 1/12. Asimismo, un  5 porciento de las observaciones deberían tener un pvalor menor a 0.05. En la tabla 1 podemos ver los resultados de calcular la media y la desviación típica para cada test y  caso de estudio contemplado, mientras que en la tabla 2 se encuentran la proporción de los pvalores que son menores a 0.05 en cada caso de estudio. 

	We simulate $500$ times two samples in which the null hypothesis is correct. The censoring rates are $10$ and $30$ percent, and the sample size of $20$ and $50$ individuals. As under the null hypothesis 
	
	p-value    $\distas{}\text{Uniform}(0,1)$, the mean of the p-values obtained should be close to $0.5$, and the Standard deviation  $\sqrt{1/12}=0.2886751$. Likewise, approximately the $5$ percent of the observations should have a value less than $0.05$. In Table \ref{fig:n10}  we can see the results of calculating the mean and standard deviation for each test and case study contemplated, while in Table \ref{fig:n11} shows the proportion of p values that are less or equal than $0.05$ in the same cases.

\begin{sidewaystable}[h!] % <-- HERE
		\caption{Empirical mean and standard deviation of p-values for each case of study under the null hypothesis.}
		\centering
		\resizebox{25cm}{!}{%
			\begin{tabular}{cccccccccccccccccccc}			\hline
				Method: & &	& &	 Energy distance  & Energy distance & Energy distance & Energy distance &Energy distance & Kernel & Kernel & Kernel & Kernel & Logrank & Gehan & Tarone & Peto & Flemming \\
				& &	& &	 $\alpha=1$ & $\alpha=0.4$ & $\alpha=0.8$ & $\alpha=1.2$ & $\alpha=1.6$ & Gaussian $\sigma=1$  & $Laplacian$ $\sigma=1$ & Quadratic $c=1, \beta=1$ & Quadratic  $c=2, \beta=2$ & & & & & $\rho=1,\gamma=1$  \\			
				
				\\

				Comparative	& $n_1$ & $n_2$ & Censoring rate   & $\overline{x}\Mypm \sigma$ & $\overline{x}\Mypm \sigma$ & $\overline{x}\Mypm \sigma$ & $\overline{x}\Mypm \sigma$ & $\overline{x}\Mypm sd$ & $\overline{x}\Mypm \sigma$ & $\overline{x}\Mypm \sigma$ & $\overline{x}\Mypm \sigma$ & $\overline{x}\Mypm \sigma$ & $\overline{x}\Mypm \sigma$ & $\overline{x}\Mypm \sigma$ & $\overline{x}\Mypm \sigma$ & $\overline{x}\Mypm \sigma$ & $\overline{x}\Mypm \sigma$	
				
				\\ 
				\hline
				Exp(1)	& 20 & 20  & 0.1 & 0.496 $\Mypm$   0.286 & 0.493 $\Mypm$   0.283 & 0.495 $\Mypm$   0.285 & 0.497 $\Mypm$   0.287 & 0.499 $\Mypm$   0.289 & 0.495 $\Mypm$   0.287 & 0.493 $\Mypm$   0.284 & 0.493 $\Mypm$   0.288 & 0.497 $\Mypm$   0.286 & 0.508 $\Mypm$   0.291 & 0.505 $\Mypm$   0.288 & 0.503 $\Mypm$   0.285 & 0.502 $\Mypm$   0.285 & 0.495 $\Mypm$   0.296 \\ 
				Exp(1)	& 50 & 50 & 0.1 & 0.482 $\Mypm$   0.293 & 0.478 $\Mypm$   0.290 & 0.481 $\Mypm$   0.293 & 0.483 $\Mypm$   0.292 & 0.485 $\Mypm$   0.289 & 0.481 $\Mypm$   0.298 & 0.478 $\Mypm$   0.294 & 0.479 $\Mypm$   0.296 & 0.482 $\Mypm$   0.297 & 0.492 $\Mypm$   0.293 & 0.489 $\Mypm$   0.295 & 0.478 $\Mypm$   0.280 & 0.486 $\Mypm$   0.292 & 0.490 $\Mypm$   0.295 \\ 
				Exp(1.5)	& 20 & 20	 & 0.1 & 0.482 $\Mypm$   0.287 & 0.490 $\Mypm$   0.285 & 0.484 $\Mypm$   0.286 & 0.480 $\Mypm$   0.287 & 0.477 $\Mypm$   0.288 & 0.489 $\Mypm$   0.290 & 0.49 $\Mypm$   0.285 & 0.493 $\Mypm$   0.288 & 0.483 $\Mypm$   0.289 & 0.471 $\Mypm$   0.296 & 0.486 $\Mypm$   0.289 & 0.475 $\Mypm$   0.291 & 0.481 $\Mypm$   0.288 & 0.458 $\Mypm$   0.288 \\ 
				Exp(1.5)	& 50 & 50	 & 0.1 & 0.482 $\Mypm$   0.295 & 0.475 $\Mypm$   0.288 & 0.481 $\Mypm$   0.293 & 0.483 $\Mypm$   0.296 & 0.484 $\Mypm$   0.297 & 0.485 $\Mypm$   0.293 & 0.481 $\Mypm$   0.289 & 0.487 $\Mypm$   0.289 & 0.483 $\Mypm$   0.295 & 0.492 $\Mypm$   0.299 & 0.501 $\Mypm$   0.295 & 0.492 $\Mypm$   0.295 & 0.498 $\Mypm$   0.295 & 0.479 $\Mypm$   0.293 \\ 
				Exp(1)	& 20 & 20	 & 0.3 & 0.508 $\Mypm$   0.288 & 0.509 $\Mypm$   0.288 & 0.508 $\Mypm$   0.288 & 0.507 $\Mypm$   0.288 & 0.508 $\Mypm$   0.290 & 0.503 $\Mypm$   0.285 & 0.506 $\Mypm$   0.287 & 0.502 $\Mypm$   0.286 & 0.504 $\Mypm$   0.287 & 0.495 $\Mypm$   0.284 & 0.502 $\Mypm$   0.297 & 0.500 $\Mypm$   0.294 & 0.499 $\Mypm$   0.295 & 0.507 $\Mypm$   0.286 \\ 
				Exp(1)	& 50 & 50	 & 0.3 & 0.494 $\Mypm$   0.297 & 0.496 $\Mypm$   0.295 & 0.494 $\Mypm$   0.297 & 0.494 $\Mypm$   0.295 & 0.494 $\Mypm$   0.291 & 0.493 $\Mypm$   0.297 & 0.495 $\Mypm$   0.297 & 0.494 $\Mypm$   0.298 & 0.493 $\Mypm$   0.297 & 0.503 $\Mypm$   0.296 & 0.486 $\Mypm$   0.297 & 0.491 $\Mypm$   0.296 & 0.486 $\Mypm$   0.297 & 0.502 $\Mypm$   0.287 \\ 
				Exp(1.5)	& 20 & 20	 & 0.3 & 0.500 $\Mypm$   0.290 & 0.510 $\Mypm$   0.293 & 0.503 $\Mypm$   0.291 & 0.498 $\Mypm$   0.289 & 0.495 $\Mypm$   0.288 & 0.492 $\Mypm$   0.284 & 0.506 $\Mypm$   0.292 & 0.498 $\Mypm$   0.288 & 0.492 $\Mypm$   0.284 & 0.497 $\Mypm$   0.295 & 0.499 $\Mypm$   0.289 & 0.493 $\Mypm$   0.285 & 0.495 $\Mypm$   0.286 & 0.501 $\Mypm$   0.292 \\ 
				Exp(1.5)	& 50 & 50	 & 0.3 & 0.489 $\Mypm$   0.301 & 0.487 $\Mypm$   0.297 & 0.488 $\Mypm$   0.300 & 0.489 $\Mypm$   0.301 & 0.490 $\Mypm$   0.299 & 0.489 $\Mypm$   0.301 & 0.486 $\Mypm$   0.299 & 0.486 $\Mypm$   0.301 & 0.490 $\Mypm$   0.302 & 0.496 $\Mypm$   0.298 & 0.492 $\Mypm$   0.294 & 0.495 $\Mypm$   0.299 & 0.492 $\Mypm$   0.294 & 0.500 $\Mypm$   0.299 \\ 
				Gamma(1,1)	& 20  & 20 & 0.1 & 0.501 $\Mypm$   0.294 & 0.508 $\Mypm$   0.297 & 0.503 $\Mypm$   0.295 & 0.499 $\Mypm$   0.293 & 0.493 $\Mypm$   0.288 & 0.512 $\Mypm$   0.297 & 0.508 $\Mypm$   0.296 & 0.511 $\Mypm$   0.296 & 0.505 $\Mypm$   0.295 & 0.491 $\Mypm$   0.284 & 0.510 $\Mypm$   0.294 & 0.498 $\Mypm$   0.288 & 0.506 $\Mypm$   0.292 & 0.493 $\Mypm$   0.282 \\ 
				Gamma(1,1)	& 50  & 50 & 0.1& 0.503 $\Mypm$   0.291 & 0.504 $\Mypm$   0.288 & 0.504 $\Mypm$   0.289 & 0.503 $\Mypm$   0.293 & 0.502 $\Mypm$   0.297 & 0.512 $\Mypm$   0.292 & 0.508 $\Mypm$   0.288 & 0.511 $\Mypm$   0.290 & 0.511 $\Mypm$   0.293 & 0.505 $\Mypm$   0.292 & 0.508 $\Mypm$   0.287 & 0.505 $\Mypm$   0.290 & 0.508 $\Mypm$   0.288 & 0.502 $\Mypm$   0.290 \\ 
				Gamma(1.5,1.5)	& 20  & 20 & 0.1 & 0.519 $\Mypm$   0.295 & 0.516 $\Mypm$   0.289 & 0.519 $\Mypm$   0.294 & 0.520 $\Mypm$   0.296 & 0.522 $\Mypm$   0.295 & 0.515 $\Mypm$   0.301 & 0.516 $\Mypm$   0.295 & 0.515 $\Mypm$   0.299 & 0.516 $\Mypm$   0.299 & 0.52 $\Mypm$   0.290 & 0.519 $\Mypm$   0.289 & 0.522 $\Mypm$   0.291 & 0.516 $\Mypm$   0.287 & 0.509 $\Mypm$   0.287 \\ 
				Gamma(1.5,1.5)	& 50  & 50 & 0.1&  0.499 $\Mypm$   0.290 & 0.493 $\Mypm$   0.291 & 0.497 $\Mypm$   0.290 & 0.501 $\Mypm$   0.289 & 0.506 $\Mypm$   0.287 & 0.494 $\Mypm$   0.289 & 0.495 $\Mypm$   0.292 & 0.493 $\Mypm$   0.291 & 0.498 $\Mypm$   0.289 & 0.515 $\Mypm$   0.295 & 0.505 $\Mypm$   0.289 & 0.509 $\Mypm$   0.289 & 0.506 $\Mypm$   0.288 & 0.505 $\Mypm$   0.291 \\ 
				Gamma(1,1)	& 20  & 20 & 0.3  & 0.477 $\Mypm$   0.288 & 0.485 $\Mypm$   0.288 & 0.479 $\Mypm$   0.288 & 0.475 $\Mypm$   0.289 & 0.474 $\Mypm$   0.289 & 0.479 $\Mypm$   0.289 & 0.484 $\Mypm$   0.287 & 0.484 $\Mypm$   0.288 & 0.475 $\Mypm$   0.289 & 0.477 $\Mypm$   0.297 & 0.467 $\Mypm$   0.288 & 0.464 $\Mypm$   0.288 & 0.463 $\Mypm$   0.287 & 0.489 $\Mypm$   0.292 \\ 
				Gamma(1,1)	& 50  & 50 & 0.3	 & 0.489 $\Mypm$   0.293 & 0.497 $\Mypm$   0.296 & 0.491 $\Mypm$   0.294 & 0.486 $\Mypm$   0.290 & 0.482 $\Mypm$   0.287 & 0.495 $\Mypm$   0.289 & 0.497 $\Mypm$   0.293 & 0.495 $\Mypm$   0.288 & 0.492 $\Mypm$   0.291 & 0.485 $\Mypm$   0.292 & 0.513 $\Mypm$   0.300 & 0.498 $\Mypm$   0.293 & 0.511 $\Mypm$   0.300 & 0.474 $\Mypm$   0.287 \\  
				Gamma(1.5,1.5)	& 20  & 20 & 0.3 & 0.491 $\Mypm$   0.293 & 0.494 $\Mypm$   0.293 & 0.492 $\Mypm$   0.294 & 0.491 $\Mypm$   0.293 & 0.489 $\Mypm$   0.293 & 0.493 $\Mypm$   0.294 & 0.494 $\Mypm$   0.294 & 0.494 $\Mypm$   0.295 & 0.492 $\Mypm$   0.293 & 0.484 $\Mypm$   0.297 & 0.499 $\Mypm$   0.294 & 0.490 $\Mypm$   0.295 & 0.494 $\Mypm$   0.292 & 0.484 $\Mypm$   0.294 \\ 
				Gamma(1.5,1.5)	& 50  & 50 & 0.3	& 0.495 $\Mypm$   0.295 & 0.489 $\Mypm$   0.294 & 0.493 $\Mypm$   0.294 & 0.496 $\Mypm$   0.295 & 0.499 $\Mypm$   0.293 & 0.492 $\Mypm$   0.293 & 0.49 $\Mypm$   0.295 & 0.491 $\Mypm$   0.294 & 0.492 $\Mypm$   0.294 & 0.509 $\Mypm$   0.289 & 0.49 $\Mypm$   0.291 & 0.493 $\Mypm$   0.288 & 0.489 $\Mypm$   0.292 & 0.514 $\Mypm$   0.288 \\ 
				Lognormal(0,0.5) & 20 &	20 & 0.1 & 0.49 $\Mypm$   0.287 & 0.495 $\Mypm$   0.288 & 0.492 $\Mypm$   0.287 & 0.489 $\Mypm$   0.288 & 0.488 $\Mypm$   0.290 & 0.49 $\Mypm$   0.283 & 0.493 $\Mypm$   0.287 & 0.489 $\Mypm$   0.284 & 0.490 $\Mypm$   0.285 & 0.472 $\Mypm$   0.279 & 0.477 $\Mypm$   0.287 & 0.470 $\Mypm$   0.285 & 0.473 $\Mypm$   0.286 & 0.483 $\Mypm$   0.287 \\ 
				Lognormal(0,0.5) & 50 &	50 & 0.10 &0.503 $\Mypm$   0.283 & 0.506 $\Mypm$   0.284 & 0.504 $\Mypm$   0.283 & 0.503 $\Mypm$   0.283 & 0.502 $\Mypm$   0.282 & 0.500 $\Mypm$   0.283 & 0.506 $\Mypm$   0.285 & 0.505 $\Mypm$   0.285 & 0.500 $\Mypm$   0.283 & 0.508 $\Mypm$   0.283 & 0.504 $\Mypm$   0.279 & 0.508 $\Mypm$   0.286 & 0.504 $\Mypm$   0.281 & 0.515 $\Mypm$   0.296 \\ 
				Lognormal(0,0.25) & 20 &	20 & 0.1 & 0.481 $\Mypm$   0.294 & 0.484 $\Mypm$   0.300 & 0.482 $\Mypm$   0.295 & 0.48 $\Mypm$   0.292 & 0.480 $\Mypm$   0.292 & 0.481 $\Mypm$   0.294 & 0.482 $\Mypm$   0.296 & 0.480 $\Mypm$   0.293 & 0.481 $\Mypm$   0.295 & 0.484 $\Mypm$   0.291 & 0.476 $\Mypm$   0.295 & 0.473 $\Mypm$   0.291 & 0.471 $\Mypm$   0.293 & 0.487 $\Mypm$   0.290 \\ 
				Lognormal(0,0.25) & 50 &	50 & 0.1  & 0.517 $\Mypm$   0.289 & 0.512 $\Mypm$   0.287 & 0.516 $\Mypm$   0.288 & 0.518 $\Mypm$   0.290 & 0.519 $\Mypm$   0.293 & 0.517 $\Mypm$   0.291 & 0.516 $\Mypm$   0.288 & 0.515 $\Mypm$   0.288 & 0.518 $\Mypm$   0.292 & 0.517 $\Mypm$   0.292 & 0.523 $\Mypm$   0.291 & 0.522 $\Mypm$   0.293 & 0.522 $\Mypm$   0.291 & 0.506 $\Mypm$   0.284 \\ 
				Lognormal(0,0.0.5) & 20 &	20 & 0.3  & 0.495 $\Mypm$   0.288 & 0.498 $\Mypm$   0.287 & 0.496 $\Mypm$   0.288 & 0.493 $\Mypm$   0.288 & 0.489 $\Mypm$   0.287 & 0.495 $\Mypm$   0.287 & 0.497 $\Mypm$   0.287 & 0.496 $\Mypm$   0.284 & 0.493 $\Mypm$   0.289 & 0.495 $\Mypm$   0.285 & 0.489 $\Mypm$   0.288 & 0.488 $\Mypm$   0.287 & 0.485 $\Mypm$   0.286 & 0.516 $\Mypm$   0.289 \\ 
				Lognormal(0,0.5) & 50 &	50 & 0.3  & 0.482 $\Mypm$   0.293 & 0.490 $\Mypm$   0.297 & 0.485 $\Mypm$   0.295 & 0.480 $\Mypm$   0.292 & 0.476 $\Mypm$   0.291 & 0.482 $\Mypm$   0.294 & 0.488 $\Mypm$   0.297 & 0.484 $\Mypm$   0.294 & 0.480 $\Mypm$   0.294 & 0.476 $\Mypm$   0.296 & 0.473 $\Mypm$   0.293 & 0.468 $\Mypm$   0.287 & 0.47 $\Mypm$   0.292 & 0.487 $\Mypm$   0.296 \\ 
				Lognormal(0,0.25) & 20 &	20 & 0.3 & 0.522 $\Mypm$   0.293 & 0.513 $\Mypm$   0.289 & 0.519 $\Mypm$   0.292 & 0.523 $\Mypm$   0.295 & 0.525 $\Mypm$   0.297 & 0.526 $\Mypm$   0.298 & 0.519 $\Mypm$   0.292 & 0.526 $\Mypm$   0.298 & 0.526 $\Mypm$   0.298 & 0.516 $\Mypm$   0.293 & 0.526 $\Mypm$   0.306 & 0.524 $\Mypm$   0.303 & 0.524 $\Mypm$   0.306 & 0.518 $\Mypm$   0.286 \\ 
				Lognormal(0,0.25) & 50 &	50 & 0.3  & 0.504 $\Mypm$   0.291 & 0.508 $\Mypm$   0.287 & 0.505 $\Mypm$   0.290 & 0.504 $\Mypm$   0.292 & 0.504 $\Mypm$   0.295 & 0.501 $\Mypm$   0.296 & 0.504 $\Mypm$   0.29 & 0.499 $\Mypm$   0.294 & 0.502 $\Mypm$   0.296 & 0.491 $\Mypm$   0.289 & 0.500 $\Mypm$   0.296 & 0.496 $\Mypm$   0.295 & 0.498 $\Mypm$   0.295 & 0.494 $\Mypm$   0.289 \\ 
				\hline
			\end{tabular}
				\label{fig:n10}
		}
	\end{sidewaystable} % <-- HERE

	\begin{sidewaystable}[ht]
		
		\caption{Proportion p-values less or equal $0.05$ for each case of study  under the null hypothesis.}
		\centering
		\resizebox{25cm}{!}{%
			\begin{tabular}{cccccccccccccccccccc}			\hline
				Method: & &	& &	 Energy distance  & Energy distance & Energy distance & Energy distance &Energy distance & Kernel & Kernel & Kernel & Kernel & Logrank & Gehan & Tarone & Peto & Flemming \\
				& &	& &	 $\alpha=1$ & $\alpha=0.4$ & $\alpha=0.8$ & $\alpha=1.2$ & $\alpha=1.6$ & Gaussian $\sigma=1$  & $Laplacian$ $\sigma=1$ & Quadratic $c=1, \beta=1$ & Quadratic  $c=2, \beta=2$ & & & & & $\rho=1,\gamma=1$  \\

				\\

				Comparative	& $n_1$ & $n_2$ & Censoring rate   & $\hat{p}$ & $\hat{p}$ & $\hat{p}$ & $\hat{p}$ & $\hat{p}$ & $\hat{p}$ & $\hat{p}$ & $\hat{p}$ & $\hat{p}$ & $\hat{p}$ & $\hat{p}$ & $\hat{p}$ & $\hat{p}$ & $\hat{p}$	
				
				\\

				\hline
				Exp(1) & 20 & 20 & 0.1 & 0.048 & 0.048 & 0.048 & 0.048 & 0.050 & 0.052 & 0.046 & 0.050 & 0.054 & 0.050 & 0.046 & 0.046 & 0.044 & 0.058 \\ 
				Exp(1) & 50 & 50 & 0.1 & 0.056 & 0.060 & 0.054 & 0.058 & 0.056 & 0.052 & 0.056 & 0.056 & 0.056 & 0.060 & 0.066 & 0.064 & 0.064 & 0.056 \\ 
				Exp(1.5) & 20 & 20  & 0.1 &  0.066 & 0.056 & 0.070 & 0.066 & 0.066 & 0.072 & 0.058 & 0.064 & 0.066 & 0.066 & 0.058 & 0.062 & 0.058 & 0.062 \\ 
				Exp(1.5) & 50 & 50 & 0.1 & 0.042 & 0.048 & 0.042 & 0.042 & 0.046 & 0.048 & 0.044 & 0.044 & 0.042 & 0.062 & 0.056 & 0.050 & 0.056 & 0.054 \\ 
				Exp(1) & 20 & 20 & 0.3 & 0.058 & 0.058 & 0.060 & 0.060 & 0.060 & 0.005 & 0.042 & 0.042 & 0.056 & 0.058 & 0.054 & 0.056 & 0.056 & 0.056 \\ 
				Exp(1) & 50 & 50 &  0.3 & 0.056 & 0.056 & 0.056 & 0.056 & 0.048 & 0.054 & 0.050 & 0.056 & 0.050 & 0.050 & 0.052 & 0.054 & 0.058 & 0.046 \\ 
				Exp(1.5) & 20 & 20 & 0.3 & 0.058 & 0.048 & 0.056 & 0.054 & 0.052 & 0.052 & 0.052 & 0.048 & 0.056 & 0.054 & 0.054 & 0.050 & 0.052 & 0.052 \\ 
				Exp(1.5) & 50 & 50 & 0.3 & 0.064 & 0.048 & 0.062 & 0.064 & 0.074 & 0.064 & 0.044 & 0.054 & 0.056 & 0.066 & 0.068 & 0.066 & 0.068 & 0.056 \\ 
				Gamma(1,1) & 20 & 20 & 0.3 & 0.058 & 0.056 & 0.060 & 0.056 & 0.054 & 0.052 & 0.060 & 0.060 & 0.052 & 0.054 & 0.054 & 0.056 & 0.058 & 0.054 \\ 
				Gamma(1,1) & 50 & 50 & 0.1 & 0.044 & 0.042 & 0.044 & 0.042 & 0.044 & 0.042 & 0.040 & 0.038 & 0.042 & 0.038 & 0.038 & 0.030 & 0.032 & 0.050 \\ 
				Gamma(1.5,1.5) & 20 & 20 & 0.1  & 0.062 & 0.058 & 0.062 & 0.066 & 0.066 & 0.060 & 0.060 & 0.056 & 0.064 & 0.046 & 0.048 & 0.048 & 0.048 & 0.062 \\ 
				Gamma(1.5,1.5) & 50 & 50 & 0.1 & 0.050 & 0.054 & 0.052 & 0.048 & 0.048 & 0.054 & 0.052 & 0.050 & 0.052 & 0.046 & 0.044 & 0.046 & 0.044 & 0.050 \\ 
				Gamma(1,1) & 20 & 20 & 0.3 & 0.058 & 0.058 & 0.062 & 0.058 & 0.058 & 0.064 & 0.054 & 0.060 & 0.062 & 0.056 & 0.060 & 0.058 & 0.052 & 0.066 \\ 
				Gamma(1,1) & 50 & 50 & 0.3 & 0.058 & 0.060 & 0.060 & 0.060 & 0.056 & 0.056 & 0.062 & 0.060 & 0.054 & 0.054 & 0.052 & 0.058 & 0.050 & 0.046 \\ 
				Gamma(1.5,1.5) & 20 & 20 & 0.3  & 0.068 & 0.056 & 0.066 & 0.068 & 0.066 & 0.070 & 0.056 & 0.060 & 0.070 & 0.058 & 0.060 & 0.064 & 0.062 & 0.066 \\ 
				Gamma(1.5,1.5) & 50 & 50 & 0.3 &  0.056 & 0.060 & 0.058 & 0.054 & 0.052 & 0.056 & 0.068 & 0.064 & 0.066 & 0.050 & 0.062 & 0.060 & 0.062 & 0.050 \\ 
				Lognormal(0,0.5) & 20 & 20 & 0.1 & 0.050 & 0.046 & 0.050 & 0.046 & 0.042 & 0.052 & 0.054 & 0.058 & 0.044 & 0.052 & 0.044 & 0.044 & 0.042 & 0.048 \\ 
				Lognormal(0,0.5) & 50 & 50 & 0.1 & 0.040 & 0.040 & 0.036 & 0.040 & 0.042 & 0.038 & 0.040 & 0.040 & 0.040 & 0.040 & 0.034 & 0.040 & 0.036 & 0.040 \\ 
				Lognormal(0,0.25) & 20 & 20 & 0.1 & 0.084 & 0.080 & 0.082 & 0.080 & 0.078 & 0.076 & 0.080 & 0.078 & 0.074 & 0.062 & 0.078 & 0.076 & 0.080 & 0.054 \\ 
				Lognormal(0,0.25) & 50 & 50 & 0.1 & 0.038 & 0.040 & 0.042 & 0.040 & 0.038 & 0.040 & 0.044 & 0.044 & 0.034 & 0.036 & 0.044 & 0.044 & 0.040 & 0.038 \\ 
				Lognormal(0,0.5) & 20 & 20 & 0.3 & 0.046 & 0.042 & 0.050 & 0.050 & 0.048 & 0.050 & 0.046 & 0.050 & 0.050 & 0.042 & 0.052 & 0.040& 0.048 & 0.050 \\ 
				Lognormal(0,0.5) & 50 & 50 & 0.3 & 0.072 & 0.076 & 0.074 & 0.076 & 0.076 & 0.074 & 0.074 & 0.072 & 0.072 & 0.078 & 0.082 & 0.078 & 0.082 & 0.066 \\ 
				Lognormal(0,0.25) & 20 & 20 & 0.3 & 0.056 & 0.052 & 0.054 & 0.056 & 0.060 & 0.060 & 0.054 & 0.056 & 0.062 & 0.050 & 0.056 & 0.058 & 0.054 & 0.042 \\ 
				Lognormal(0,0.25) & 50 & 50 & 0.3 & 0.044 & 0.050 & 0.046 & 0.046 & 0.040 & 0.040 & 0.052 & 0.044 & 0.040 & 0.046 & 0.060 & 0.046 & 0.058 & 0.048 \\ 
				\hline
		\end{tabular}} 
				\label{fig:n11}	
	
\end{sidewaystable}

	%Los resultados mostrados con los nuevos propuestos bajo la hipótesis nula son coherentes y similares a los de la familia log rank test. Nótese que  es normal que exista ciertas discrepancias con los valores teóricos al estar realizando la comparativa con 500 repeticiones, en $14$ test distintos. A su vez,  el estimador Kaplan Meier  usado en nuestros modelos y en algunos de la familia del  log rank presenta cierto sesgo (dependiente del ratio de censura) lo que produce que existan pequeñas desviaciones bajo lo esperando en un marco teórico bajo la  hipótesis nula.

	The results shown of the new tests proposed under the null hypothesis are consistent and similar to those of the logrank test family. Note that it is normal that there are certain discrepancies with the theoretical values when doing the comparison with $500$ repetitions, in $14$ different tests. In turn, the Kaplan-Meier estimator used in our models and in some of the logrank family presents a certain bias (dependent on the censoring ratio), which produces small deviations under what is expected in a theoretical framework under the null hypothesis.

	\subsection{Alternative hyphotesis}

	As before, we simulate $500$ repetitions of two samples, but this time the null hypothesis is unfulfilled. The cases we studied are the following: the hazard ratio is proportional between  two populations (the logrank test is the most powerful test in this context), healing occurs in a one population,  in a population the density function has several modes as a consequence of a multimodal treatment, there is  delayed effects in a population. The sample size vary by $20$, $50$ and $100$ people in each group and the censoring mechanics change between experiments. The significance level $\alpha$  of $0.05$ is used as the cutoff for significance.

	In each figure for each subcase we represent four graphs:  In the first one, the power of the tests of the energy of data, in the second of the kernel methods, in the third of the logrank test together with the other family methods, and in the last, the logrank test, the average power of the energy of data tests, of the kernel methods, and of the family logrank test.

	\subsubsection{Proportional hazard ratio in  two population}

	We simulate $500$ times varying the sample sizes with $20$ individuals from each group, $50$ and $100$, in  the following $10$ cases of study: $X\distas{}Exp(1)$ versus $Y\distas{}Exp(\theta) \text{ }(\text{with} \text{ }\theta\in\{1,1.1,1.3,1.4,1.5,1.6,1.7,1.8,1.9,2\})$.

	We representate the results based on the variation of the parameter $\theta$ for each sample size $n$ in different figures. In figure \ref{fig:n1} we show the results for $n = 20$, in figure \ref{fig:n2}, for $n =50$, and finally in figure \ref{fig:n3} for $n = 100$. As we can see in the three figures, the logrank test is usually the most powerful test, as is logical in the situation where this test is optimal from a theoretical point of view. However, the average of the results obtained by the distance of energy  is not far in statistical power. We can also appreciate that the selection of the parameters of both the energy distance and the kernel methods leads to more or less power for this case study,  which gives great flexibility to the family of tests.

	\subsubsection{Cure}
	
	We simulate data with the next predefined hazard ratio  function for each population on $[0,100]$: 
	
	$$
	\lambda_0(t)= 0.5 \text{  }  \forall t\in[0,100]
	$$
	
	and
	
	$$
	\lambda_1(t)= \left\{ \begin{array}{lcc}
	0.5-0.1t &   if  & 0\leq t \leq 5 \\
	
	0 &  if  & 5 <t \leq 100
	\end{array}.
	\right
	.
	$$

	The censoring times $C_{ij}\distas{iid} Uniform(0,10)$ $(j=0,1,i=1, \dots,n_j)$. In the figure \ref{fig:n4} we can see the graphical representation of the survival function resulting from calculating the Kaplan-Meier estimator using $ 5000 $ subjects of each group. Figure \ref{fig:n5} collects the results of the power study, where it can be seen that in this case the most powerful tests are those given by energy distance and kernel methods. It is curious that in the tests of these two families there is hardly any variability between the tests studied, however this is not the case in the family logrank test where there are many differences between the different tests.

	%podemos ver la reprentación gráfica de la función de supervivencia resultante de calcular el estimador Kaplan-meier usando $5000$ sujetos en  cada grupo. La figura $5$ recoge los resultados de estudio de potencia, en donde se puede apreciar que en este caso los test más potentes son los dados por la distancia de energia y los métodos kernel. Es curioso, que en los test de estas  dos familias apenas existe variabilidad entre los test estudiados, sin embargo esto no es así en la familia logrank test donde hay una muchas diferencias entre los distintos test.

	\subsubsection{Multimodality}

	We simulate data also with  default hazard ratio  function  for each population on $[0,100]$:

	$$
	\lambda_0(t)= 0.45 \text{  }  \forall t\in[0,100]
	$$
	
	and
	
	$$
	\lambda_1(t)= \left\{ \begin{array}{lcc}
	0.2+0.1t &   if  & 0\leq t \leq 1 \\
	
	0.5 &   if  & 1< t \leq 2 \\

	0.2+0.1(t-2) &   if  &2< t \leq 3 \\   
	0.5 & if  & 3<t\leq 4 \\

	0.2+0.1(t-4) &   if  & 4< t \leq 5 \\
	
	0.5 &   if  & 5< t \leq 100
	
	\end{array}.
	\right
	.
	$$

	The censoring times $C_{ij}\distas{iid} Uniform(0,10)$ $(j=0,1,i=1,\dots,n_j)$.  The figure \ref{fig:n6} show the Kaplan Meier estimator of each group. In this case of study, the family of the logrank test has more power  (figure \ref{fig:n7})  than the chosen tests based on energy distance and the kernel method. In turn, there is much discrepancy in the power achieved in many tests of this family, with some of them like Fleming having less power than the new tests proposed.

	\subsubsection{Delayed effect}
	
	We consider the next hazard ratio functions for each population:
	
	$$
	\lambda_0(t)= 0.4 \text{  }  \forall t\in[0,100]
	$$
	
	and
	
	$$
	\lambda_1(t)= \left\{ \begin{array}{lcc}
	0.6-0.1t &   if  & 0\leq t \leq 5 \\
	
	0.1 &   if  & 5 < t \leq 100 \\
	\end{array}.
	\right
	.
	$$

	The censoring variables $C_{ij}\distas{iid} Uniform(0,15)$ $(j=0,1,i=1,\dots,n_j)$. The Kaplan Meier estimator is shown in the figure \ref{fig:n8}.  
	
	In this last simulation, the methods based on kernel methods are the most powerful by far (figure \ref{fig:n9}). The power achieved by the log rank family tests and the energy distance is similar. However, the power of the log rank test is very low, with hardly any greater detection capacity than under the null hypothesis. In addition, this also occurs at the energy distance for $\alpha= 1.2$ and $\alpha= 1.6$, which shows that the appropriate parameter selection is necessary for the correct use of these tests.

	\section{Final remarks}
	
	In this article a new statistics for testing the equality of survival distributions with censored data are proposed. The tests are consistent against all  alternatives and with finite samples in situations of great clinical interest, such as the new oncological treatments where the new pharmacological strategies consist of introducing a delay effect \cite{melero2014therapeutic,xu2017designing} in the new drugs, greatly exceeding the performance of the classic tests if we select the correct parameters. In the other situations analyzed, the performance is higher as in the case of the study of healing, very close to the optimum when the hazard ratio is constant and slightly worse in the case of simulated multimodal treatments. In general, the performance is better than the classic tests, however there are certain issues such as the choice of optimal parameters or kernels in each situation that are still unresolved (It also happens in the uncensored case \cite{szekely2017energy}). In addition, in the analysis of survival when estimating the mean \cite{datta2005estimating}, it is common to consider the Efron correction \cite{efron1967two} that consists of considering that the maximum time observed in each group is uncensored ($\delta_{0(n_0:n_0)} = \delta_{0(n_1:n_1)}= 1$), or resorting to other imputation techniques with censored observations, both for the estimation of the mean \cite{datta2005estimating}, or in the global estimation of the weights of the Kaplan Meier estimator such as presmoothed \cite{cao2004presmoothed}. In any case, this may increase the power of the tests, but also increase the bias.
	
	%From a theoretical point of view we still need to establish the asymptotic distribution of the tests that is dependent on deducting the distribution of a degenerate U statistics \cite{korolyuk2013theory} for the case of two samples under right censoring.
	
	The extension of the tests proposed with $k$-samples is analogous to the case without censorship, in which there is a variety of literature such as Disco analysis \cite{rizzo2010disco}, extension of the ANOVA test to testing the equality distribution in an uncensored context, or more recent the kernel methods proposed method in \cite{balogoun2018kernel} .
	
	Soon on my github at \url{https://github.com/mmatabuena} will appear  a R package  called energysurv with the proposed methods implemented in C++ in which the scientific community could use the new tests as a valuable alternative to classical survival tests.

	\section*{Graphics}

	\begin{figure}[h!]
		\centering
		\includegraphics[width=0.7\linewidth]{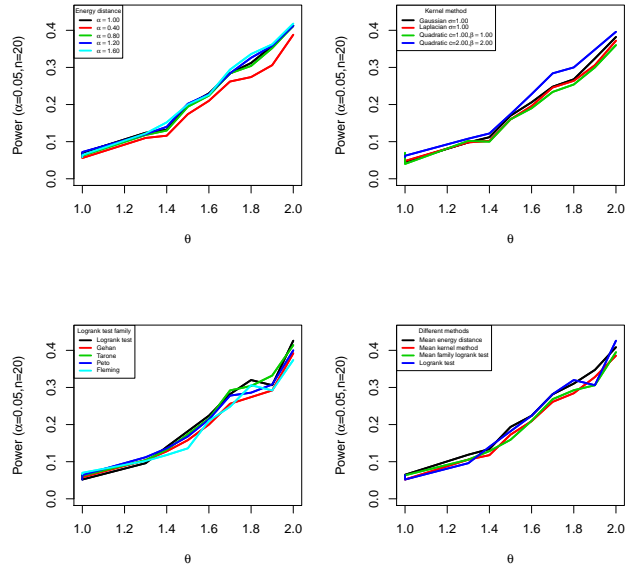}
		\caption{Power case of study: proportional hazard ratio in  two population $n=20$.}
		\label{fig:n1}
	\end{figure}

	\begin{figure}[h!]
		\centering
		\includegraphics[width=0.7\linewidth]{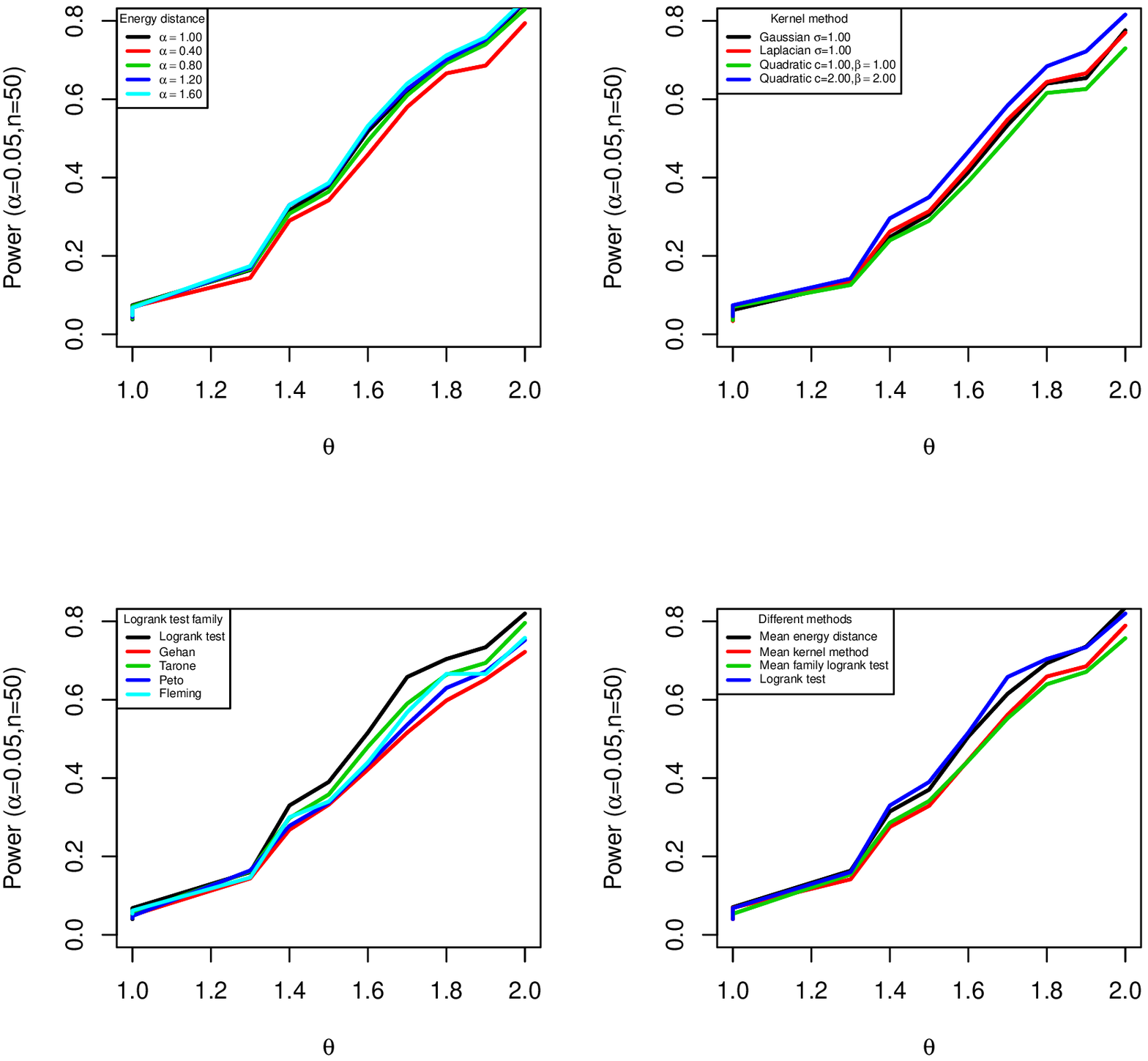}
		\caption{Power case of study: proportional hazard ratio in  two population $n=50$.}
		\label{fig:n2}
	\end{figure}

	\begin{figure}
		\centering
		\includegraphics[width=0.7\linewidth]{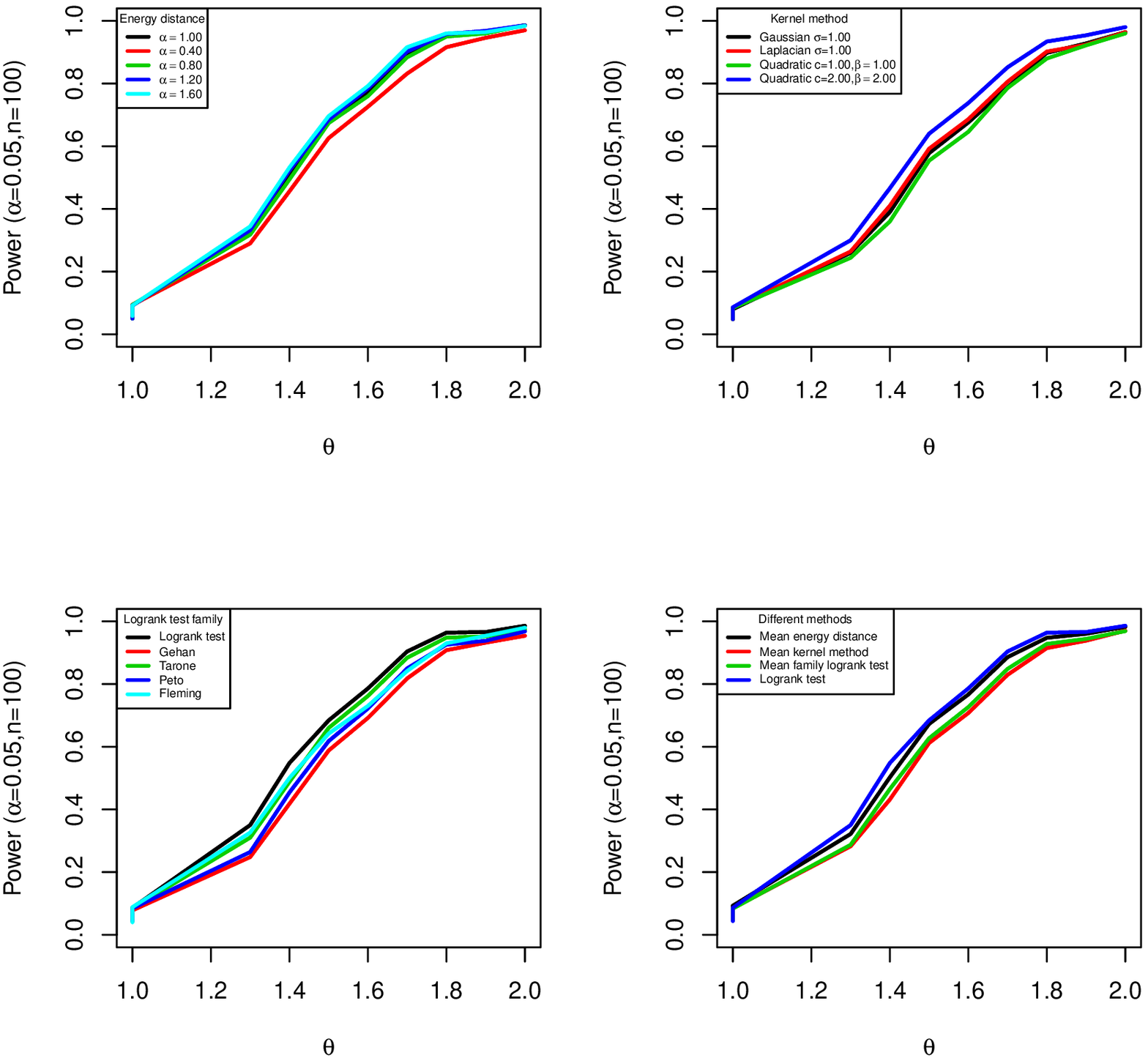}
		\caption{Power case of study: proportional hazard ratio in  two population $n=100$.}
		\label{fig:n3}
	\end{figure}

	\begin{figure}[h!]
		\centering
		\includegraphics[width=0.7\linewidth]{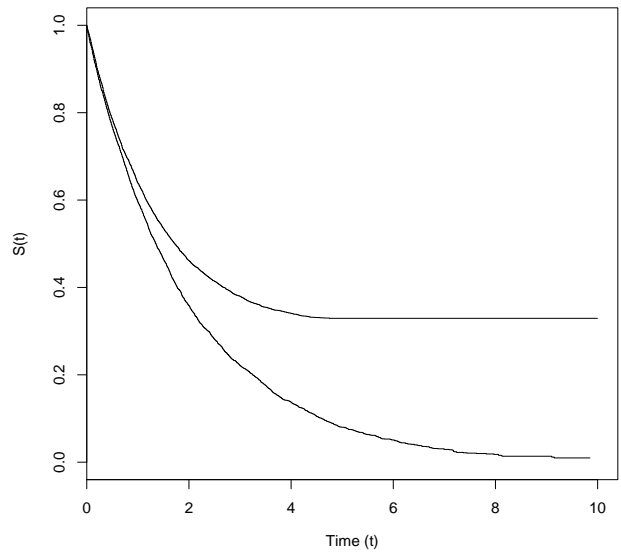}
		\caption{Survival curves case of study: healing ocurrs in a one  population.}
		\label{fig:n4}
	\end{figure}
	
	\begin{figure}
		\centering
		\includegraphics[width=0.7\linewidth]{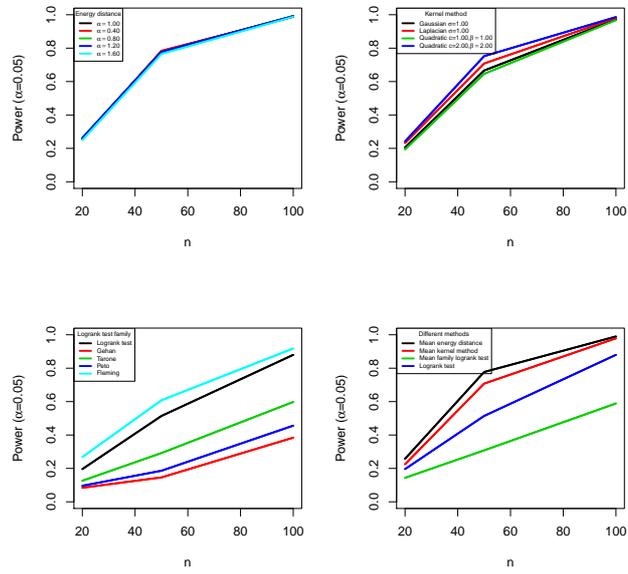}
		\caption{Power case of study: healing ocurrs in a one  population.}
		\label{fig:n5}
	\end{figure}

	\begin{figure}[h!]
		\centering
		\includegraphics[width=0.7\linewidth]{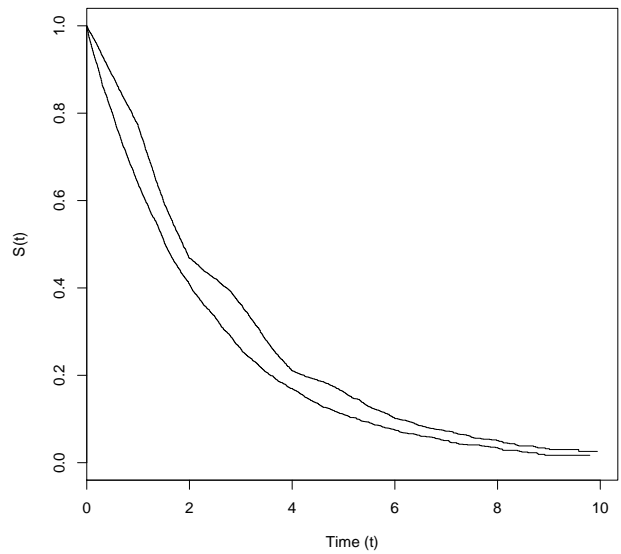}
		\caption{Survival curves case of study: multimodality treatment.}
		\label{fig:n6}
	\end{figure}

	\begin{figure}
		\centering

		\includegraphics[width=0.7\linewidth]{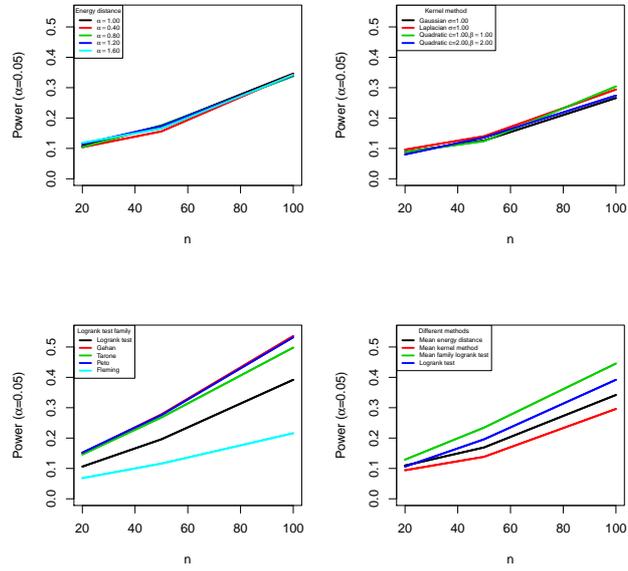}
		\caption{Power case of study: multimodality treatment.}
		\label{fig:n7}
	\end{figure}
	
	\begin{figure}[h!]
		\centering
		\includegraphics[width=0.7\linewidth]{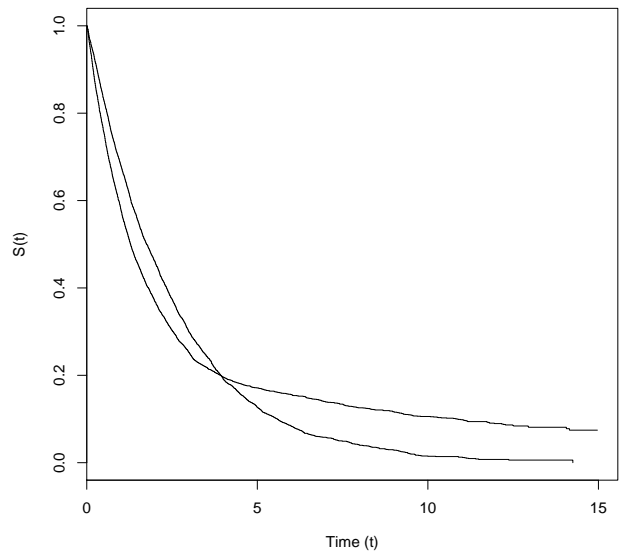}
		\caption{Survival curves case of study: delayed effects ocurrs in a one  population.}
		\label{fig:n8}
	\end{figure}
	
	\begin{figure}
		\centering
		
		\includegraphics[width=0.7\linewidth]{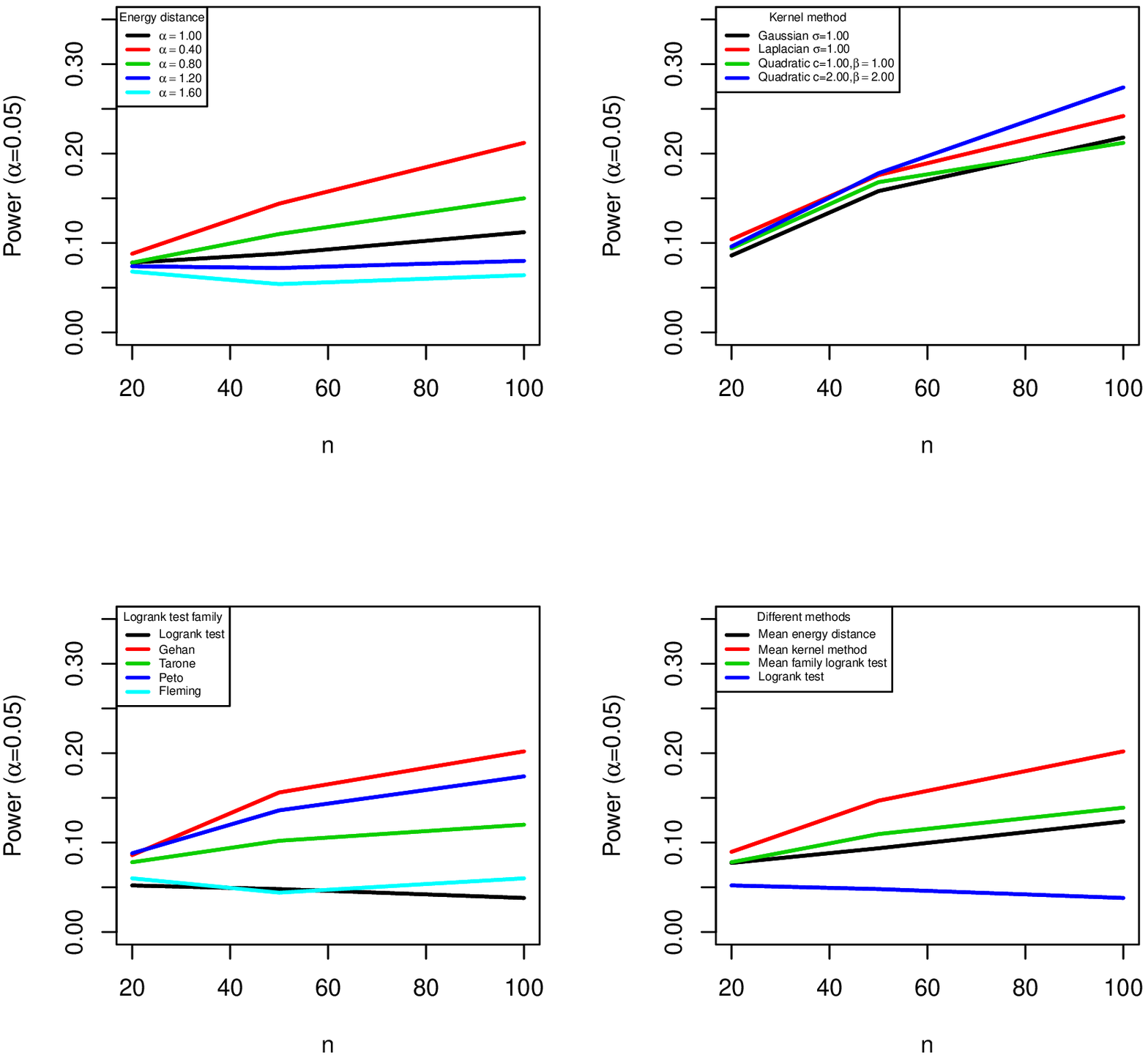}
		\caption{Power case of study: delayed effects in a one population.}
		\label{fig:n9}
		
	\end{figure}

	\section{Acknowledgements*}
	This work has received financial support from the Conseller\'ia de Cultura, Educación e Ordenaci\'on Universitar\'ia (accreditation 2016-2019, ED431G/08) and the European Regional Development Fund (ERDF).

\bibliographystyle{apalike}
\bibliography{bibliografia}

\end{document}